\begin{document}

\newtheorem{theorem}{Theorem}[section]
\newtheorem{corollary}[theorem]{Corollary}
\newtheorem{definition}[theorem]{Definition}
\newtheorem{proposition}[theorem]{Proposition}
\newtheorem{lemma}[theorem]{Lemma}
\newtheorem{conjecture}[theorem]{Conjecture}
\newenvironment{proof}{\noindent {\bf Proof.}}{\rule{3mm}{3mm}\par\medskip}
\newcommand{\remark}{\medskip\par\noindent {\bf Remark.~~}}

\title{Incidence Matrices of  Polarized Projective Spaces\footnote{This work is supported by the National Natural Science Foundation of China (No. 11071160).}}
\author{Chunlei Liu\footnote{Dept. of Math., Shanghai Jiaotong Univ., Shanghai,
200240, clliu@sjtu.edu.cn.}, Haode Yan\footnote{Corresponding author,
Dept. of Math., SJTU, Shanghai, 200240, hdyan@sjtu.edu.cn.}}

\maketitle
\thispagestyle{empty}

\abstract{In this paper, we first define  a non-degenerate symmetric bilinear form on $\mathbb{F}_{q}^4$. Then we get an incidence  matrix $\mathbf{G}$ of  $\mathbb{F}_{q}^4$ by the bilinear form. By its corresponding quadratic form $Q$, the lines of $\mathbb{F}_{q}^4$ are classified as isotropic and anisotropic lines. Under this classification, we can get two sub-matrices of $\mathbf{G}$ and prove their 2-rank.}

\noindent {\bf Key words and phrases}: finite field, quadratic form, incidence matrix.
\section{\small{INTRODUCTION}}
Throughout this paper, $q$ is an odd prime power, and
$\mathbb{F}_{q}$ is the finite field with $q$ elements. We endow the
space $V=\mathbb{F}_{q}^4$ with a polarization, i.e., a
non-degenerate symmetric bilinear form $\langle \  ,\ \rangle$ over
$\mathbb{F}_q$. Let $Q$ be the corresponding quadratic form, without
loss of generality, we always assume that
$Q(X)=x_0^2-x_1^2+x_2^2-\alpha x_3^2$, where $\alpha$ is a nonzero
element in $\mathbb{F}_{q}$. As usual, we denote by $\mathbf{P}(V)$ the
projective space of $V$, which is formed of the 1-dimensional
subspaces of $V$. For each nonzero vector $X\in V$, let $\bar{X}\in
\mathbf{P}(V)$ be the 1-dimensional subspace generated by $X$. The quadratic
form $Q$ gives rise to a matrix whose rows and columns are indexed
by elements of $\mathbf{P}(V)$. This matrix is defined by the formula
\[\mathbf{G}:\mathbf{P}(V)\times \mathbf{P}(V) \rightarrow \{ 0, 1 \},
\mathbf{G}(\bar{X},\bar{Y})=
\begin{cases}1,\bar{X} \perp {\bar{Y}},\\0,\bar{X} \not\perp {\bar{Y}}. \end{cases}
\]
We call that matrix the incidence matrix of $\mathbf{P}(V)$ associated to
$Q$.

A vector $\bar{X}\in \mathbf{P}(V)$ is called {\it isotropic} or {\it
anisotropic} according to whether $X\in V$ is {\it isotropic} or
{\it anisotropic}. Let $I$ (resp. $A$) be the set of isotropic
(resp. anisotropic) vectors in $\mathbf{P}(V)$, and set \[\mathbf{G}_{II}=
\mathbf{G}|_{I\times I}\text{ and }
\mathbf{G}_{AA}=\mathbf{G}|_{A\times A}. \]
From \cite{1,3}, we have that the 2-rank of $\mathbf{G}_{II}$ is $q+1$ and the 2-rank of $\mathbf{G}_{AA}$ is $q^{2}-1$, respectively, in the case of $V=\mathbb{F}_{q}^3$.
Moreover, based on computational evidence, it was conjectured in \cite{1}
that in the case of $V=\mathbb{F}_{q}^3$, $\mathbf{G}_{II}$ is always full rank and $\mathbf{G}_{AA}$ is  full rank or its 2-rank is one less than its order  according to $n$ is odd or even. In this paper, we will prove the following theorem:
\begin{theorem}
In the case of $V=\mathbb{F}_{q}^4$, $\mathbf{G}_{II}$ and $\mathbf{G}_{AA}$  are of full rank.
\end{theorem}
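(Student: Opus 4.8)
The plan is to work entirely over $\mathbb{F}_2$ and to exploit the single clean identity satisfied by the full matrix $\mathbf{G}$, together with the block decomposition induced by the partition of $\mathbf{P}(V)$ into $I$ and $A$. Writing
\[
\mathbf{G}=\begin{pmatrix}\mathbf{G}_{II}&\mathbf{G}_{IA}\\ \mathbf{G}_{AI}&\mathbf{G}_{AA}\end{pmatrix},\qquad \mathbf{G}_{AI}=\mathbf{G}_{IA}^{T},
\]
everything should follow from two parity computations and some formal linear algebra over $\mathbb{F}_2$.

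First I would prove that $\mathbf{G}^2=I$ over $\mathbb{F}_2$. The $(\bar X,\bar Z)$ entry of $\mathbf{G}^2$ counts the points $\bar Y$ with $Y\perp X$ and $Y\perp Z$, i.e.\ the points of $\mathbf{P}(\{X,Z\}^{\perp})$. For $\bar X\neq\bar Z$ the space $\{X,Z\}^{\perp}$ is $2$-dimensional by nondegeneracy, so it carries $q+1\equiv 0$ points; for $\bar X=\bar Z$ the space $X^{\perp}$ is $3$-dimensional, carrying $q^2+q+1\equiv 1$ points. Hence $\mathbf{G}^2=I$, and in particular its bottom–right block gives the relation $\mathbf{G}_{AI}\mathbf{G}_{IA}+\mathbf{G}_{AA}^2=I$.

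The heart of the argument, and the only place where I expect real work, is the analogous parity count restricted to $I$: I would show $\mathbf{G}_{II}^2=I$ over $\mathbb{F}_2$ by counting, for isotropic $\bar X,\bar Z$, the \emph{isotropic} points of $\{X,Z\}^{\perp}$. When $\bar X=\bar Z$, the hyperplane $X^{\perp}$ is a $3$-dimensional space whose radical is the isotropic line $\langle X\rangle$; splitting off the radical reduces the count to the isotropic vectors of a nondegenerate $2$-dimensional quotient, which is odd ($2q+1$ or $1$ according to the Witt type of $Q$). When $\bar X\neq\bar Z$, the plane $\{X,Z\}^{\perp}$ is either totally isotropic (which occurs only when $Q$ has Witt index $2$, i.e.\ $\alpha$ is a square, giving $q+1$ points) or nondegenerate (giving $0$ or $2$ isotropic points), and all of these are even. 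Thus $\mathbf{G}_{II}^2=I$, which both proves that $\mathbf{G}_{II}$ has full rank and, via the top–left block of $\mathbf{G}^2=I$, yields $\mathbf{G}_{IA}\mathbf{G}_{AI}=0$. The obstacle here is purely bookkeeping: the precise point–counts depend on whether $\alpha$ is a square, so I would organize the computation through the Witt decomposition of $Q$ and the classical formulas for the number of isotropic points of a quadric over $\mathbb{F}_q$, verifying that every relevant parity is type-independent.

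Finally, the full rank of $\mathbf{G}_{AA}$ follows formally, and notably does \emph{not} require $\mathbf{G}_{AA}^2=I$ (which in fact fails, since the degenerate ``tangent'' planes contribute odd counts). Suppose $\mathbf{G}_{AA}v=0$. From $\mathbf{G}_{AI}\mathbf{G}_{IA}+\mathbf{G}_{AA}^2=I$ we obtain $v=\mathbf{G}_{AI}\mathbf{G}_{IA}v$; left–multiplying by $\mathbf{G}_{IA}$ and using $\mathbf{G}_{IA}\mathbf{G}_{AI}=0$ gives $\mathbf{G}_{IA}v=0$, whence $v=\mathbf{G}_{AI}(\mathbf{G}_{IA}v)=0$. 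Therefore $\mathbf{G}_{AA}$ is injective, hence invertible over $\mathbb{F}_2$, i.e.\ of full rank. This completes the proof.
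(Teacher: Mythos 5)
Your proof is correct, and for $\mathbf{G}_{AA}$ it takes a genuinely different and substantially shorter route than the paper. On $\mathbf{G}_{II}$ the two arguments essentially coincide: both reduce to showing $\mathbf{G}_{II}^2\equiv I\pmod 2$, the paper by explicit coordinate case analysis (its Lemmas 2.5 and 2.6, split according to whether $\alpha$ is a square), you by the radical/Witt dichotomy for the planes $\{X,Z\}^{\perp}$ --- cleaner bookkeeping, same parities; your dichotomy is sound, since the radical of $\{X,Z\}^{\perp}$ equals the radical of the span of $X$ and $Z$, which for two isotropic generators is either zero or everything. The real divergence is the anisotropic case. The paper never touches the full matrix $\mathbf{G}$: it partitions $A$ into square and non-square classes $S\cup T$, proves a tangency criterion (Lemma 3.2: $\{P_1,P_2\}^{\perp}$ carries exactly one isotropic line iff $\langle P_1,P_1\rangle\langle P_2,P_2\rangle=\langle P_1,P_2\rangle^2$) plus four further parity lemmas (3.3--3.6), and concludes $\mathbf{G}_{AA}^4\equiv I\pmod 2$. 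You instead use the global identity $\mathbf{G}^2\equiv I\pmod 2$ --- immediate, since hyperplanes of $\mathbb{F}_q^4$ have $q^2+q+1$ points (odd) and codimension-$2$ subspaces $q+1$ (even) --- and extract invertibility of $\mathbf{G}_{AA}$ by pure block algebra from $\mathbf{G}_{IA}\mathbf{G}_{AI}=0$ and $\mathbf{G}_{AI}\mathbf{G}_{IA}+\mathbf{G}_{AA}^2=I$; I checked this manipulation and it is valid, and your warning that $\mathbf{G}_{AA}^2\not\equiv I$ is apt (tangent planes contribute $q$ anisotropic lines, an odd count), so the detour through $\mathbf{G}$ is genuinely necessary. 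What each buys: the paper's longer route yields finer structural information, namely $\mathbf{G}_{AA}^2\equiv I+\mathrm{diag}(\mathbf{B}_1,\mathbf{B}_2)$ and $\mathbf{G}_{AA}^4\equiv I$, which may be of independent interest; your route is coordinate-free, avoids all of the paper's Section 3 machinery, and makes transparent why dimension $4$ (indeed any even dimension) is the favourable case --- it is exactly what makes both parities in $\mathbf{G}^2\equiv I$ come out right.
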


\section{The isotropic case}
\begin{proposition}$\mathbf{{G}}_{II}$ is of full rank over $\mathbb{F}_2$.
\end{proposition}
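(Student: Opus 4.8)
The plan is to first translate $\mathbf{G}_{II}$ into a concrete combinatorial matrix by describing perpendicularity among isotropic points geometrically. With the convention $\langle X,X\rangle = Q(X)$, one has $Q(aX+bY) = a^2Q(X) + 2ab\langle X,Y\rangle + b^2Q(Y)$, so when $\bar X,\bar Y$ are both isotropic this expression vanishes for all $a,b$ precisely when $\langle X,Y\rangle = 0$ (using $2\neq 0$ in odd characteristic). Hence for distinct points $\mathbf{G}_{II}(\bar X,\bar Y)=1$ if and only if the line $\langle X,Y\rangle$ is totally isotropic, i.e. $\bar X$ and $\bar Y$ are collinear on the quadric $Q=0$; and the diagonal is identically $1$ since $\langle X,X\rangle = Q(X)=0$. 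Thus $\mathbf{G}_{II} = I + N$ over $\mathbb{F}_2$, where $N$ is the collinearity (point-graph) matrix of the quadric.

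Next I would split on the type of $Q$, which is governed by the square class of $\alpha$. Since $x_0^2-x_1^2$ is a hyperbolic plane, $Q$ has Witt index $2$ (plus type) when $\alpha$ is a square and Witt index $1$ (minus type) when $\alpha$ is a non-square. In the minus-type case the quadric is an elliptic quadric, which contains no line, so no two distinct isotropic points are collinear on it; then $N=0$, giving $\mathbf{G}_{II}=I$, which is trivially of full rank.

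The substantial case is the plus type, where the quadric is a hyperbolic quadric isomorphic as a point-line geometry to $\mathbf{P}^1\times\mathbf{P}^1$ with its two rulings. Its $(q+1)^2$ points may be coordinatized by pairs $(i,j)$ with $i,j\in\{0,\ldots,q\}$, and two distinct points are collinear on the quadric exactly when they agree in one coordinate. By inclusion-exclusion over $\mathbb{F}_2$ this gives $\mathbf{G}_{II} = I\otimes J + J\otimes I + I\otimes I$, with $I$ and $J$ the identity and all-ones matrices of size $q+1$. Viewing $f=(f(i,j))$ as a function, $\mathbf{G}_{II}f=0$ is then equivalent to $f(i,j)=R_i+C_j$ for all $i,j$, where $R_i$ and $C_j$ denote the row and column sums of $f$.

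The final step, and the place where the hypothesis that $q$ is odd becomes indispensable, is to show this kernel is trivial. Summing $f(i,j)=R_i+C_j$ over $j$ and using that $q+1$ is even forces $R_i=\sum_j C_j$ independent of $i$; summing over $i$ forces $C_j=\sum_i R_i$; feeding these back yields $R_i=C_j=0$ and hence $f\equiv 0$. Therefore $\mathbf{G}_{II}$ is invertible over $\mathbb{F}_2$, finishing the plus-type case and the proposition. I expect the main obstacle to lie not in the linear algebra but in the geometric bookkeeping: cleanly justifying the $\mathbf{P}^1\times\mathbf{P}^1$ coordinatization together with the collinearity criterion, and verifying that the two Witt-index cases are delimited exactly by the square class of $\alpha$.
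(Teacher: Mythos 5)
Your proof is correct, but it takes a genuinely different route from the paper's in the substantial (hyperbolic) case. The paper never identifies $\mathbf{G}_{II}$ explicitly; instead it squares it: its Lemma 2.5 shows by a lengthy coordinate-by-coordinate case analysis that every isotropic $\bar X$ has an odd number of isotropic lines in $\bar X^{\perp}$ (namely $2q+1$ when $\alpha$ is a square, $1$ when it is not), and its Lemma 2.6 shows that for distinct isotropic $\bar P,\bar Q$ the number of isotropic lines in $\bar P^{\perp}\cap\bar Q^{\perp}$ is even; together these give $\mathbf{G}_{II}^2\equiv I\pmod 2$, hence full rank (in the non-square case the paper, like you, simply observes $\mathbf{G}_{II}=I$). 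You instead exploit the global structure of the hyperbolic quadric --- its coordinatization by $\mathbf{P}^1\times\mathbf{P}^1$ with collinearity meaning agreement in one coordinate --- to write $\mathbf{G}_{II}=I\otimes J+J\otimes I+I\otimes I$ over $\mathbb{F}_2$, and then kill the kernel by a short parity argument resting on $q+1$ being even. What your route buys: an explicit structural description of the matrix and a few-line kernel computation, replacing the paper's ad hoc Lemma 2.5, which finds isotropic lines in $\bar X^{\perp}$ by solving quadratic equations case by case. What the paper's route buys: it uses only local counting in perps and double perps, with no appeal to the classification of quadrics in $PG(3,q)$ or to the ruling structure, and its conclusion $\mathbf{G}_{II}^2\equiv I\pmod 2$ is itself extra information (the matrix is an involution mod $2$). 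Your delimitation of the two cases by the square class of $\alpha$ is sound, since $x_0^2-x_1^2$ is hyperbolic and $x_2^2-\alpha x_3^2$ is isotropic exactly when $\alpha$ is a square, so the Witt index is $2$ or $1$ accordingly; this matches the paper's point counts $(q+1)^2$ and $q^2+1$.
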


\begin{lemma}\label{Prop:1.1}
If $\bar{X},\bar{Y}$ are different isotropic lines such that $\langle X,Y \rangle=0$.Then any line on the plane that generated by $\bar{X}$ and $\bar{Y}$  is isotropic.
\end{lemma}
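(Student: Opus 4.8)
The plan is to reduce the statement to the polarization identity connecting $Q$ with its associated bilinear form. Since $\langle\ ,\ \rangle$ is by definition the symmetric bilinear form attached to the quadratic form $Q$, for any vectors $X,Y\in V$ and any scalars $a,b\in\mathbb{F}_q$ one has
\[
Q(aX+bY)=a^{2}Q(X)+b^{2}Q(Y)+2ab\,\langle X,Y\rangle .
\]
I would open the proof by recording this expansion; its only subtlety is the coefficient $2$, and since $q$ is odd that coefficient is a unit (and in any case it multiplies a quantity that will vanish), so the precise normalization of the form is immaterial.

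Next I would describe the plane explicitly. Because $\bar{X}$ and $\bar{Y}$ are distinct lines, $X$ and $Y$ are linearly independent, so the plane they generate is the $2$-dimensional subspace $\{aX+bY : a,b\in\mathbb{F}_q\}$, and the lines lying on it are exactly the $\overline{aX+bY}$ with $(a,b)\in\mathbb{F}_q^{2}\setminus\{(0,0)\}$. The hypotheses now say precisely that $Q(X)=Q(Y)=0$ (both lines are isotropic) and $\langle X,Y\rangle=0$. Substituting these three vanishing quantities into the identity forces $Q(aX+bY)=0$ for every pair $(a,b)$. Hence every nonzero vector of the plane is isotropic, and therefore every line it contains is an isotropic line, which is the assertion.

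I do not expect any genuine obstacle here: the lemma is a direct consequence of the bilinearity of $\langle\ ,\ \rangle$ together with the two isotropy assumptions, and the heart of the argument is a single quadratic expansion. The only care needed is to note that the span of $\bar X,\bar Y$ really is a plane (which uses that the two lines are distinct) and that the factor $2$ causes no trouble. The payoff is structural: the lemma shows that a pair of perpendicular isotropic lines spans a totally isotropic plane, which is exactly the incidence structure one wants to exploit when analyzing $\mathbf{G}_{II}$.
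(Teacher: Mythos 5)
Your proposal is correct and follows essentially the same route as the paper: both expand $Q(aX+bY)=a^{2}Q(X)+b^{2}Q(Y)+2ab\langle X,Y\rangle$ by bilinearity and observe that all three terms vanish under the hypotheses. The paper's proof is just a terser version of yours (it omits your explicit remarks about linear independence and the harmless factor $2$), so there is nothing further to compare.
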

\begin{proof}
The lines on that plane is in $\mathbf{P}(\mathbb{F}_{q} X+\mathbb{F}_{q} Y)$.Because $\bar{X}$,$\bar{Y}$ are isotropic lines and $\langle X,Y\rangle=0$. Consider with the bilinear form,we get  $\langle k_1 X+k_2Y,k_1 X+k_2 Y\rangle=0$, any $k_1,k_2 \in \mathbb{F}_{q}$.Lemma has proved.
\end{proof}

\begin{lemma}
If there are three different isotropic lines on a plane,then any line on that plane is isotropic.
\end{lemma}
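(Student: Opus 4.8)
The plan is to reduce to the preceding lemma (Lemma~\ref{Prop:1.1}) by showing that, as soon as three distinct isotropic lines lie on a plane, two of them must automatically be orthogonal. Let the plane be $W=\mathbb{F}_{q}X+\mathbb{F}_{q}Y$, where $\bar{X}$ and $\bar{Y}$ are two of the three given isotropic lines. Since these are distinct lines, $X$ and $Y$ are linearly independent and form a basis of $W$. The third isotropic line $\bar{Z}$ is then spanned by some vector $Z=aX+bY$, and because $\bar{Z}$ differs from both $\bar{X}$ and $\bar{Y}$, neither coefficient $a$ nor $b$ can vanish.

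First I would write out $Q$ on a general vector of $W$. Using the bilinearity of $\langle\ ,\ \rangle$ together with $Q(X)=Q(Y)=0$ (both lines being isotropic), one obtains
\[
Q(sX+tY)=s^{2}Q(X)+2st\langle X,Y\rangle+t^{2}Q(Y)=2st\,\langle X,Y\rangle
\]
for all $s,t\in\mathbb{F}_{q}$. Applying this to $Z=aX+bY$ and using $Q(Z)=0$ yields $2ab\,\langle X,Y\rangle=0$.

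The key step is to extract orthogonality from this identity. Since $q$ is odd we have $2\neq 0$ in $\mathbb{F}_{q}$, and since $a,b\neq 0$ as noted above, we may cancel to conclude $\langle X,Y\rangle=0$. Thus $\bar{X}$ and $\bar{Y}$ are two distinct \emph{orthogonal} isotropic lines on $W$, and Lemma~\ref{Prop:1.1} immediately gives that every line on the plane is isotropic.

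I expect no serious obstacle here: the whole argument rests on the observation that, for a plane spanned by two isotropic vectors, the restricted quadratic form collapses to the single cross term $2st\,\langle X,Y\rangle$, so such a plane can carry at most two isotropic lines unless that term vanishes identically. The only points that need care are verifying that both coefficients $a$ and $b$ are nonzero (which is precisely the distinctness of the three lines) and invoking $q$ odd so that the factor $2$ is invertible.
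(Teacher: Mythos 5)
Your proof is correct and follows exactly the same route as the paper's: express the third isotropic line as $Z=aX+bY$, use $Q(Z)=0$ to force $\langle X,Y\rangle=0$, and then invoke Lemma~\ref{Prop:1.1}. You simply make explicit the details the paper leaves implicit (the nonvanishing of $a$, $b$, and the invertibility of $2$ for odd $q$), which is a welcome clarification but not a different argument.
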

\begin{proof}
Suppose $\bar{X},\bar{Y},\bar{Z}$ are different isotropic lines on a common plane.Then $\bar{Z}$ is an element in $\mathbf{P}(\mathbb{F}_{q} X+\mathbb{F}_{q} Y)$.From $\langle Z,Z\rangle=0$,we get $\langle X,Y\rangle=0$.From prop1.1,we can get the result.
\end{proof}

From the bilinear form, consider the lines in the orthogonal complement of $\bar{X}$. We use ${\bar{X}}^\perp$ to represent the set of the lines in this hyperplane,it is ${\bar{X}}^\perp=\{\bar{W}|\langle W,X \rangle =0\}$.Then we get following lemma.

\begin{lemma}$\bar{X}$, $\bar{Y}$, $\bar{Z}$ are different lines,and ${\bar{X}}^\perp \cap  {\bar{Y}}^\perp={\bar{X}}^\perp \cap {\bar{Y}}^\perp \cap {\bar{Z}}^\perp$.Then $\bar{Z}$ is in $\mathbf{P}(\mathbb{F}_{q} X+\mathbb{F}_{q} Y)$.
\end{lemma}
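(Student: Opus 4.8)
The plan is to lift the hypothesis, which is phrased in terms of the projective sets $\bar{X}^\perp$, $\bar{Y}^\perp$, $\bar{Z}^\perp$, to a statement about orthogonal complements of subspaces of $V$, and then to exploit the non-degeneracy of $\langle\,,\,\rangle$. For a nonzero $X\in V$ write $X^\perp=\{W\in V:\langle W,X\rangle=0\}$; since the form is non-degenerate on $V=\mathbb{F}_q^4$, this is a hyperplane of dimension $3$, and by definition $\bar{X}^\perp=\mathbf{P}(X^\perp)$. More generally, for a subspace $U\subseteq V$ set $U^\perp=\{W:\langle W,u\rangle=0 \text{ for all } u\in U\}$. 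I will use the two standard consequences of non-degeneracy: $\dim U+\dim U^\perp=4$ and $(U^\perp)^\perp=U$.

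First I would translate the intersections. A line $\bar{W}$ lies in $\bar{X}^\perp\cap\bar{Y}^\perp$ exactly when $W\perp X$ and $W\perp Y$, i.e. when $W\in X^\perp\cap Y^\perp=(\mathbb{F}_q X+\mathbb{F}_q Y)^\perp$; hence $\bar{X}^\perp\cap\bar{Y}^\perp=\mathbf{P}((\mathbb{F}_q X+\mathbb{F}_q Y)^\perp)$, and likewise $\bar{X}^\perp\cap\bar{Y}^\perp\cap\bar{Z}^\perp=\mathbf{P}((\mathbb{F}_q X+\mathbb{F}_q Y+\mathbb{F}_q Z)^\perp)$. Since two subspaces with equal projectivizations are equal, the hypothesis becomes the vector-space identity $(\mathbb{F}_q X+\mathbb{F}_q Y)^\perp=(\mathbb{F}_q X+\mathbb{F}_q Y+\mathbb{F}_q Z)^\perp$, equivalently the inclusion $(\mathbb{F}_q X+\mathbb{F}_q Y)^\perp\subseteq(\mathbb{F}_q Z)^\perp$.

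Writing $U=\mathbb{F}_q X+\mathbb{F}_q Y$, which is $2$-dimensional because $\bar{X}\neq\bar{Y}$ forces $X,Y$ to be independent, I would finish simply by applying $\perp$ to the identity above. Since $\perp$ reverses inclusions and is an involution on subspaces of the non-degenerate space $V$, the equality $U^\perp=(U+\mathbb{F}_q Z)^\perp$ yields $U=(U^\perp)^\perp=((U+\mathbb{F}_q Z)^\perp)^\perp=U+\mathbb{F}_q Z$, so $\mathbb{F}_q Z\subseteq U$ and thus $\bar{Z}\in\mathbf{P}(\mathbb{F}_q X+\mathbb{F}_q Y)$, as claimed. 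Equivalently, one can argue by contradiction: if $Z\notin U$ then $U+\mathbb{F}_q Z$ is $3$-dimensional, so its perp is $1$-dimensional, whereas $U^\perp$ is $2$-dimensional, contradicting the displayed identity.

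The only point requiring care, and hence the main obstacle, is the passage between the projective and the linear pictures: one must check that the set equality in the hypothesis is genuinely equivalent to the equality of the corresponding subspaces — valid here because $\mathbf{P}(\cdot)$ determines the subspace once it is at least $1$-dimensional, which holds since $\dim U^\perp=2$ — and that every use of ``taking $\perp$'' is justified by the non-degeneracy of $\langle\,,\,\rangle$, precisely the property built into the choice of $Q$ at the start of the paper. Beyond this bookkeeping the argument is pure linear-algebra duality and involves no computation with the explicit form of $Q$.
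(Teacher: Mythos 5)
Your proof is correct and is in substance the same argument as the paper's: the paper phrases the hypothesis as two linear systems (one with coefficient rows coming from $X,Y$, the other from $X,Y,Z$) having the same solution set and compares the ranks of their coefficient matrices, which is exactly your dimension count $\dim U^\perp = 4-\dim U$ combined with the double-perp identity, both resting on non-degeneracy. The difference is only presentational --- you make explicit the duality facts $(U^\perp)^\perp = U$ and the reduction from projective sets to subspaces that the paper leaves implicit in its rank argument.
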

\begin{proof}From the question we can get two groups of equations.Because  they have same solutions, so the rank of coefficient matrix must be same. $\bar{X}$,$\bar{Y}$ are different lines, so the rank of the first coefficient matrix is 2, so is the second coefficient matrix.Then $Z$ can be linear represented as $k_1 X+k_2 Y$, $k_1,k_2\in\mathbb{F}_{q} $, lemma has proved.
\end{proof}

\begin{lemma}(1)let $\alpha$ be a nonzero square element of $\mathbb{F}_{q}$, that is $Q(X)=x_{0}^{2}-x_{1}^{2}+x_{2}^{2}-x_{3}^{2}$.
We can get:any isotropic line $\bar{X}$, there are $2q+1$ isotropic lines and $q^2-q$ anisotropic lines on ${\bar{X}}^{\perp}$;any anisotropic line $\bar{Y}$,there are $q+1$ isotropic lines and $q^2$ anisotropic lines on ${\bar{Y}}^{\perp}$.\\
(2)let $\alpha$ be a non-square element of $\mathbb{F}_{q}$, that is $Q(X)=x_{0}^{2}-x_{1}^{2}+x_{2}^{2}-\alpha x_{3}^{2}$.
We can get:any isotropic line $\bar{X}$,there is $1$ isotropic line and $q^2+q$ anisotropic lines on ${\bar{X}}^{\perp}$;any anisotropic line $\bar{Y}$,there are $q+1$ isotropic lines and $q^2$ anisotropic lines on ${\bar{Y}}^{\perp}$.
\end{lemma}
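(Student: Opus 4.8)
The plan is to study the quadratic form that $Q$ induces on the three-dimensional hyperplane $X^{\perp}$ and simply count its isotropic vectors: the number of isotropic lines in $\bar{X}^{\perp}$ equals (the number of nonzero isotropic vectors of $X^{\perp}$) divided by $q-1$, and the anisotropic lines are then the complement inside the $q^{2}+q+1$ lines of the hyperplane. The decisive structural distinction is whether $Q|_{X^{\perp}}$ is non-degenerate. When $\bar{X}$ is anisotropic, $\langle X,X\rangle\neq 0$, so $X\notin X^{\perp}$, $V=\mathbb{F}_{q}X\perp X^{\perp}$, and $Q|_{X^{\perp}}$ is a non-degenerate ternary form; when $\bar{X}$ is isotropic, $X\in X^{\perp}$ and $X$ lies in the radical of $Q|_{X^{\perp}}$, so the restriction is degenerate. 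I treat the two cases separately, and within each the count turns out to be governed by a classical solution-count for forms in few variables.

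For an anisotropic $\bar{Y}$ the argument is uniform across parts (1) and (2). Here $Q|_{Y^{\perp}}$ is non-degenerate ternary, and every non-degenerate quadratic form in an odd number of variables over $\mathbb{F}_{q}$ has exactly $q^{2}$ solutions of $Q=0$, independently of its discriminant and of $Q(Y)$. Thus $Y^{\perp}$ has $q^{2}-1$ nonzero isotropic vectors, hence $(q^{2}-1)/(q-1)=q+1$ isotropic lines and $q^{2}+q+1-(q+1)=q^{2}$ anisotropic lines, exactly as asserted in both parts.

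For an isotropic $\bar{X}$ I would first identify the radical of $Q|_{X^{\perp}}$ as $\mathbb{F}_{q}X$: it equals $X^{\perp}\cap(X^{\perp})^{\perp}=X^{\perp}\cap\mathbb{F}_{q}X=\mathbb{F}_{q}X$, using non-degeneracy of $\langle\ ,\ \rangle$ on $V$. Choosing a two-dimensional complement $X^{\perp}=\mathbb{F}_{q}X\oplus U$, I check that $Q(sX+u)=Q(u)$ for $s\in\mathbb{F}_{q}$, $u\in U$, since $Q(X)=0$ and $\langle X,u\rangle=0$; hence the isotropic vectors of $X^{\perp}$ are precisely the $sX+u$ with $Q(u)=0$, and $Q|_{U}$ is non-degenerate binary. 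If $Q|_{U}$ is hyperbolic it has $2q-1$ isotropic vectors, yielding $q(2q-1)-1=2q^{2}-q-1=(2q+1)(q-1)$ nonzero isotropic vectors in $X^{\perp}$, i.e. $2q+1$ isotropic lines; if $Q|_{U}$ is anisotropic, only $u=0$ is isotropic, leaving the $q-1$ nonzero multiples of $X$, i.e. the single line $\bar{X}$.

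The remaining task—and the main obstacle—is to determine the type of $Q|_{U}$ and to match it with the square class of $\alpha$. I would fix a hyperbolic partner $Y$ of $X$ (an isotropic $Y$ with $\langle X,Y\rangle=1$), so that $V=\langle X,Y\rangle\perp W$ with $W=\langle X,Y\rangle^{\perp}$ non-degenerate binary, and observe $X^{\perp}=\mathbb{F}_{q}X\perp W$, which identifies $Q|_{U}$ with $W$. Since the discriminant is multiplicative over orthogonal sums and the hyperbolic plane has discriminant $-1$ modulo squares, $\mathrm{disc}(W)\equiv-\,\mathrm{disc}(Q)\equiv-\alpha$, and a binary form of discriminant $d$ is isotropic exactly when $-d$ is a square. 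Hence $W$ is hyperbolic iff $\alpha$ is a square and anisotropic iff $\alpha$ is a non-square, which produces the $2q+1$ isotropic lines of part (1) and the single isotropic line of part (2). The only delicate points are the bookkeeping of discriminants together with the square-class conventions, and the two standard solution-counts for ternary and binary forms; both are classical, so the real content is the structural reduction above.
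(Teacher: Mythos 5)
Your proposal is correct, and it takes a genuinely different route from the paper. The paper's proof is an explicit coordinate computation: for an isotropic $\bar{X}$ it constructs concrete vectors $P,Q\in X^{\perp}$ (with a case analysis on whether $x_0^2-x_1^2\neq 0$, etc., including a list of eight exceptional lines such as $\overline{(1,1,1,1)}$), shows the planes $\mathbf{P}(\mathbb{F}_qX+\mathbb{F}_qP)$ and $\mathbf{P}(\mathbb{F}_qX+\mathbb{F}_qQ)$ exhaust the isotropic lines, and then handles anisotropic $\bar{Y}$ indirectly, by combining the global line counts from its reference [1] with a double-counting average $\frac{(q+1)^2(q^2-q)}{q^3-q}=q+1$. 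Your argument instead analyzes the restriction $Q|_{X^{\perp}}$ structurally: radical equal to $\mathbb{F}_qX$ in the isotropic case, splitting off a hyperbolic plane to identify the non-degenerate binary part $W$ with $\mathrm{disc}(W)\equiv-\alpha$, and the classical zero counts for non-degenerate binary and ternary forms. Your approach buys several things: it is uniform (the anisotropic case is settled once for both parts (1) and (2)); it proves that \emph{every} anisotropic $\bar{Y}$ has exactly $q+1$ isotropic lines in $\bar{Y}^{\perp}$, whereas the paper's double-counting step only computes the average over all anisotropic lines and tacitly assumes this count is constant (a genuine gap in the paper, since anisotropic lines fall into two orbits — square and non-square norm — under the orthogonal group); and it generalizes immediately to higher dimensions. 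What the paper's computation buys in exchange is self-containedness at a very elementary level: it never invokes Witt decomposition, discriminants, or the standard point counts for quadrics, and it produces the isotropic lines explicitly. The classical facts you rely on (existence of a hyperbolic partner for an isotropic vector, multiplicativity of the discriminant, the $q^{2}$-point count for non-degenerate ternary forms, and the isotropy criterion for binary forms) are all standard, so your proof is complete as outlined.
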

\begin{proof}According to \cite{1}(lemma 1.9),we know if $\alpha$ is a nonzero square element,there are $q^2+2q+1$ isotropic lines and $q^3-q$ anisotropic lines on $\mathbb{F}_{q}^4$.If $\alpha$ is a non-square element,there are $q^2+1$ isotropic lines and $q^3+q$ anisotropic lines on $\mathbb{F}_{q}^4$.\\
(1)First we prove there are $2q+1$ isotropic lines on ${\bar{X}}^{\perp}$. $\langle X,X\rangle=0$,we want to find line ${\bar{Z}}$ such that $\langle X,Z\rangle=0$ and $\langle Z,Z\rangle=0$ set up at the same time.Let $X=(x_{0},x_{1},x_{2},x_{3})$,discuss below:\\
1.$x_{0}^2-x_{1}^2\not=0$.\\
So,we get $x_{0}^2-x_{1}^2=x_{3}^2-x_{2}^2\not=0$.Consider $P=(x_{3}-x_{2},x_{3}-x_{2},x_{0}-x_{1},x_{0}-x_{1})$, $Q=(x_{3}+x_{2},x_{3}+x_{2},-x_{0}+x_{1},x_{0}-x_{1})$ are the solutions of equations.We can verify that $P,Q\not=(0,0,0,0)$ and $\bar{X}$,$\bar{P}$,$\bar{Q}$ are different.Now,we get two different isotropic lines $\bar{P}$, $\bar{Q}$ on ${\bar{X}}^{\perp}$,and $\langle P,Q\rangle=-2(x_{0}^2-x_{1}^2)\not=0$.We know $\bar{X}$, $\bar{P}$, $\bar{Q}$ are not in a common plane,otherwise,according to lemma 2.2,lines on the plane are all isotropic.In particular,line  $\overline{P+Q}$ is an isotropic line.We can get $\langle P,Q\rangle=0$ by the bilinear form and $\langle P,P \rangle=\langle Q,Q \rangle =0$,contradiction.Because $\bar{X},\bar{P}$ are isotropic lines and $\langle X,P\rangle=0$,according to lemma 2.2,the lines on the plane that generated by $\bar{X}$ and $\bar{P}$ are all isotropic.It means the elements in $\mathbf{P}(\mathbb{F}_{q} X+\mathbb{F}_{q} P)$ are isotropic.The same as plane generated by $\bar{X},\bar{Q}$,we get the elements in $\mathbf{P}(\mathbb{F}_{q} X+\mathbb{F}_{q} Q)$ are isotropic.We can calculate each of $\mathbf{P}(\mathbb{F}_{q} X+\mathbb{F}_{q} P)$ and $\mathbf{P}(\mathbb{F}_{q} X+\mathbb{F}_{q} Q)$ has $q+1$ elements,and $\bar{X}$ is the unique element in both of them.So,we get $2q+1$ different isotropic lines in ${\bar{X}}^{\perp}$.Let's proof there isn't any other isotropic line on ${\bar{X}}^{\perp}$.Regard $\bar{X}$,$\bar{P}$,$\bar{Q}$ as basis of ${\bar{X}}^{\perp}$,the lines on ${\bar{X}}^{\perp}$ can be expressed as
$\mathbf{P}(\mathbb{F}_{q} X+\mathbb{F}_{q} P+\mathbb{F}_{q} Q)$.If there is another isotropic line on ${\bar{X}}^{\perp}$,let it be $\bar{R}$, $R=k_1 X+k_2 P+k_3 Q$,according to $\langle R,R\rangle=0$,we can get $k_{2}=0$ or $k_3 =0$,R is in $\mathbf{P}(\mathbb{F}_{q} X+\mathbb{F}_{q} P)$ or $\mathbf{P}(\mathbb{F}_{q} X+\mathbb{F}_{q} Q)$,contradiction.So,in this case,there are $2q+1$ isotropic lines on ${\bar{X}}^{\perp}$.\\
2.$x_{0}^2-x_{1}^2=0$,but $x_{3}^2-x_{0}^2\not=0$.\\
Let $\dot{x_{0}}=x_{3}$, $\dot{x_{1}}=x_{0}$, $\dot{x_{2}}=x_{1}$, $\dot{x_{3}}=x_{2}$.According to $x_{0}^2-x_{1}^2+x_{2}^2-x_{3}^2=0$,we can get ${\dot{x_{0}}}^2-{\dot{x_{1}}}^2+{\dot{x_{2}}}^2-{\dot{x_{3}}}^2=0$,and ${\dot{x_{0}}}^2-{\dot{x_{1}}}^2\not=0$.It's same as case 1.\\
3.$x_{0}^2-x_{1}^2=0$ and $x_{3}^2-x_{0}^2=0$.\\
It means $x_{0}^2=x_{1}^2=x_{2}^2=x_{3}^2$.Then $\bar{X}$ is one of the lines below:$\overline{(1,1,1,1)}$,$\overline{(1,1,1,-1)}$,\\$\overline{(1,1,-1,1)}$,
$\overline{(1,1,-1,-1)}$,$\overline{(1,-1,1,1)}$, $\overline{(1,-1,1,-1)}$,$\overline{(1,-1,-1,1)}$,$\overline{(1,-1,-1,-1)}$.
Similar to the proof above,any $\bar{X}$,if we find out two distinct isotropic line $\bar{P}$,$\bar{Q}$,and $\langle P,Q\rangle\not=0$,then we can prove there are $2q+1$ isotropic lines on ${\bar{X}}^{\perp}$.\\
If $\bar{X}=\overline{(1,1,1,1)}$, let $\bar{P}=\overline{(1,1,-1,-1)}$, $\bar{Q}=\overline{(1,-1,-1,1)}$.\\
If $\bar{X}=\overline{(1,1,1,-1)}$, let $\bar{P}=\overline{(1,1,-1,1)}$, $\bar{Q}=\overline{(1,-1,-1,-1)}$.\\
If $\bar{X}=\overline{(1,1,-1,1)}$, let $\bar{P}=\overline{(1,1,1,-1)}$, $\bar{Q}=\overline{(1,-1,1,1)}$.\\
If $\bar{X}=\overline{(1,1,-1,-1)}$, let $\bar{P}=\overline{(1,1,1,1)}$, $\bar{Q}=\overline{(1,-1,1,-1)}$.\\
If $\bar{X}=\overline{(1,-1,1,1)}$, let $\bar{P}=\overline{(1,-1,-1,-1)}$, $\bar{Q}=\overline{(1,1,-1,1)}$.\\
If $\bar{X}=\overline{(1,-1,1,-1)}$, let $\bar{P}=\overline{(1,-1,-1,1)}$, $\bar{Q}=\overline{(1,1,-1,-1)}$.\\
If $\bar{X}=\overline{(1,-1,-1,1)}$, let $\bar{P}=\overline{(1,-1,1,-1)}$, $\bar{Q}=\overline{(1,1,1,1)}$.\\
If $\bar{X}=\overline{(1,-1,-1,-1)}$, let $\bar{P}=\overline{(1,-1,1,1)}$, $\bar{Q}=\overline{(1,1,1,-1)}$.\\
In conclusion,we proved when $\alpha$ is a nonzero square element, any isotropic line $\bar{X}$ on $\mathbb{F}_{q}^4$,there are $2q+1$ isotropic lines on ${\bar{X}}^{\perp}$.Because there are $q^2+q+1$ lines on ${\bar{X}}^{\perp}$, so there are $q^2-q$ anisotropic lines on ${\bar{X}}^{\perp}$.If $\bar{Y}$ is an anisotropic line , we can calculate that the number of isotropic lines on ${\bar{Y}}^{\perp}$ equals $\frac{(q+1)^2 (q^2-q)}{q^3-q}$,it is $q+1$.The number of anisotropic lines on ${\bar{Y}}^{\perp}$ is $q^2$.(1) has proved.\\
(2)Now $\alpha$ is a non-square element and $Q(X)=x_{0}^{2}-x_{1}^{2}+x_{2}^{2}-\alpha x_{3}^{2}$.If $X=(x_{0},x_{1},x_{2},x_{3})$ and line $\bar{X}$ is an isotropic line,we want to prove $\bar{X}$ is the unique isotropic line on ${\bar{X}}^{\perp}$.Discuss below:\\
$1^{'}$. $x_{2}^2\not=\alpha x_{3}^2$.It is said $x_{0}^2-x_{1}^2=x_{2}^2-\alpha x_{3}^2\not=0$. ${\bar{X}}\in {\bar{X}}^{\perp}$,and we can find out $P=(x_{1},x_{0},\alpha x_{3}, x_{2})$, $Q=(-x_{1},-x_{0},\alpha x_{3},x_{2})$. $\bar{P},\bar{Q}\in{\bar{X}}^{\perp}$.We can verify that $\bar{P},\bar{Q}$ are different lines and $\bar{X},\bar{P},\bar{Q}$ are not in a common plane.Notice that $\langle P,P\rangle=\langle Q,Q\rangle=(1-\alpha)(x_{2}^2-\alpha x_{3}^2)\not=0$.So $\bar{P},\bar{Q}$ are anisotropic lines.Consider if there exsits an isotropic line $\bar{M} \in \mathbf{P}(\mathbb{F}_{q} P+\mathbb{F}_{q} Q)$.Let $M=k_1 P+k_2 Q$,obviously $k_1,k_2\not=0$.From $\langle M,M \rangle =0$,we get $\langle k_1 P+k_2 Q,k_1 P+k_2 Q\rangle={k_1}^2 \langle P,P\rangle+2k_1 k_2\langle P,Q\rangle+k_2^2\langle Q,Q\rangle=0$.Regard this equation as a quadratic equation about $k_1$. $\Delta=16\alpha k_2^2 (x_2^2-\alpha x_3^2)$ is a non-square element,so the equation has no solution.It means there is no isotropic line in $\mathbf{P}(\mathbb{F}_{q} P+\mathbb{F}_{q} Q)$.Any $\bar{M}\in \mathbf{P}(\mathbb{F}_{q} P+\mathbb{F}_{q} Q)$, $\langle M,M\rangle\not=0$,but $\langle X,M\rangle=k_1\langle X,P\rangle+k_2\langle X,Q\rangle=0$.Consider $\mathbf{P}(\mathbb{F}_{q} X+\mathbb{F}_{q} M)$,any $\overline{t_1 X + t_2 M} \in \mathbf{P}(\mathbb{F}_{q} X+\mathbb{F}_{q} M)$,if $t_2\not=0$, $\langle t_1 X+t_2 M,t_1 X+t_2 M\rangle=t_1^2\langle X,X\rangle+2t_1 t_2\langle M,X\rangle+t_2^2\langle M,M\rangle=t_2^2\langle M,M\rangle\not=0$,so $\bar{X}$ is the unique isotropic line in $\mathbf{P}(\mathbb{F}_{q} X+\mathbb{F}_{q} M)$.And $\bar{M}$ is any line in  $\mathbf{P}(\mathbb{F}_{q} P+\mathbb{F}_{q} Q)$,there are $q+1$ kinds of choices.So we can get $q+1$ planes,every of them has one isotropic line $\bar{X}$ and $q$ anisotropic lines.Every two planes has only one common line,it is $\bar{X}$.So we get $q(q+1)$ different anisotropic lines on ${\bar{X}}^{\perp}$.Because $\bar{X}^{\perp}$ has $q^2+q+1$ lines,so they are the $q(q+1)$ different anisotropic lines and $\bar{X}$.Case $1^{'}$ has proved.\\
$2^{'}$. $x_{2}^2=\alpha x_{3}^2$.Because $\alpha$ is a non-square element,so $x_{2}=x_{3}=0$.Then we know $x_{0}=\pm x_{1}$.So there are only two lines satisfy this condition, $\overline{(1,1,0,0)}$ and $\overline{(1,-1,0,0)}$.Solve the equations we can get there is only one isotropic line $\overline{(1,1,0,0)}$ and $q^2+q$ anisotropic lines on $\overline{(1,1,0,0)}^{\perp}$,there is only one isotropic line $\overline{(1,-1,0,0)}$ and $q^2+q$ anisotropic lines on $\overline{(1,-1,0,0)}^{\perp}$.\\
In conclusion,we proved when $\alpha$ is a non-square element,any isotropic line $\bar{X}$ in $\mathbb{F}_{q}^4$,there is $1$ isotropic line and $q^2+q$ anisotropic lines on ${\bar{X}}^{\perp}$.If $\bar{Y}$ is an anisotropic line,we can calculate the number of isotropic lines on ${\bar{Y}}^{\perp}$ equals $\frac{(q^2+1)(q^2+q)}{q^3+q}$,it is equals $q+1$ , and the number of anisotropic lines on ${\bar{Y}}^{\perp}$ is $q^2$.(2) has proved.
\end{proof}

\begin{lemma}$\bar{P},\bar{Q}$ are different isotropic lines,then the number of isotropic lines on ${\bar{P}}^{\perp}\cap {\bar{Q}}^{\perp}$ is even.
\end{lemma}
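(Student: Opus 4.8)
The plan is to reduce the count to the number of projective zeros of a binary quadratic form, and then to win by the parity forced by the classification of such forms. Write $U=P^{\perp}\cap Q^{\perp}$, viewed as a subspace of $V$; since $\bar{P}\neq\bar{Q}$ the vectors $P,Q$ are linearly independent, so $P^{\perp}$ and $Q^{\perp}$ are distinct hyperplanes and $\dim_{\mathbb{F}_q}U=2$. A line $\bar{W}$ lies in $\bar{P}^{\perp}\cap\bar{Q}^{\perp}$ exactly when $W\in U$, so the isotropic lines I must count are precisely the $\bar{W}\in\mathbf{P}(U)$ with $Q(W)=0$, i.e.\ the projective zeros of the restricted form $Q|_{U}$, a quadratic form in two variables.

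Next I would locate the radical of $Q|_{U}$, which is $U\cap U^{\perp}$. By nondegeneracy of the ambient form, $U^{\perp}=(P^{\perp}\cap Q^{\perp})^{\perp}=\mathbb{F}_q P+\mathbb{F}_q Q$. Testing when $aP+bQ$ lies in $U=P^{\perp}\cap Q^{\perp}$ gives the two conditions $b\langle P,Q\rangle=0$ and $a\langle P,Q\rangle=0$ (using $\langle P,P\rangle=\langle Q,Q\rangle=0$). Hence the radical $U\cap U^{\perp}$ is $\{0\}$ when $\langle P,Q\rangle\neq 0$, and is all of $\mathbb{F}_q P+\mathbb{F}_q Q$ when $\langle P,Q\rangle=0$. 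The decisive observation is that this radical has dimension $0$ or $2$, but never $1$: the two isotropic vectors $P,Q$ enter $U$ together or not at all.

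I would then finish by cases. If $\langle P,Q\rangle=0$, then $P,Q\in U$ force $U=\mathbb{F}_q P+\mathbb{F}_q Q$, and Lemma \ref{Prop:1.1} shows that every one of the $q+1$ lines of $\mathbf{P}(U)$ is isotropic; since $q$ is odd, $q+1$ is even. If $\langle P,Q\rangle\neq 0$, then $Q|_{U}$ is a nondegenerate binary form over $\mathbb{F}_q$, which is either anisotropic, contributing $0$ isotropic lines, or hyperbolic, contributing exactly $2$; again even. The main obstacle is precisely the intermediate possibility that this argument rules out: a rank-one binary form would have a single isotropic line and destroy the parity, so the heart of the proof is the dimension computation showing that the radical of $Q|_{U}$ can never be one-dimensional. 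The standard count of $0$ or $2$ projective zeros for a nondegenerate binary quadratic form is then routine.
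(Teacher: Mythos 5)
Your proof is correct, but it takes a genuinely different route from the paper's. The paper works from inside the intersection: it first disposes of the case where $\bar P^{\perp}\cap\bar Q^{\perp}$ contains no isotropic line, then picks an isotropic $\bar X$ there, completes it to a basis $\{X,Y\}$ of the plane, rules out the configuration $\langle X,Y\rangle=0$, $\langle Y,Y\rangle\neq 0$ by a contradiction argument routed through its Lemma 2.4 (forcing $X\in\mathbb{F}_q P+\mathbb{F}_q Q$ and hence $\langle P,Q\rangle=0$), and finally counts the isotropic lines other than $\bar X$ by solving the linear equation $2t_1\langle X,Y\rangle+\langle Y,Y\rangle=0$, obtaining $q+1$ or $2$ lines. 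You instead work by duality from the outside: from $U^{\perp}=\mathbb{F}_q P+\mathbb{F}_q Q$ you read off that the radical $U\cap U^{\perp}$ is $\{0\}$ or all of $U$ according to whether $\langle P,Q\rangle\neq 0$ or $\langle P,Q\rangle=0$, then finish with Lemma \ref{Prop:1.1} in the totally isotropic case and with the classification of nondegenerate binary quadratic forms ($0$ or $2$ projective zeros) in the other. The mathematical heart is the same in both arguments --- the radical of the restricted form can never be one-dimensional, which is exactly the paper's ``impossible configuration'' --- but you establish it in one line of duality instead of via Lemma 2.4, you avoid the preliminary case split on whether an isotropic line exists at all, and you get the sharper conclusion that the count is $q+1$, $2$, or $0$ according to an explicit criterion on $\langle P,Q\rangle$. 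The trade-off is that you invoke two standard facts the paper never needs, namely $(W_1\cap W_2)^{\perp}=W_1^{\perp}+W_2^{\perp}$ for nondegenerate forms and the classification of binary forms, whereas the paper's longer argument is self-contained at the level of its own lemmas and explicit linear algebra.
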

\begin{proof}If there is no isotropic line on ${\bar{P}}^{\perp}\cap {\bar{Q}}^{\perp}$, the number must be $0$,even.
If ${\bar{P}}^{\perp}\cap {\bar{Q}}^{\perp}$ has some isotropic lines, then select $\bar{X}$,so $\langle X,X\rangle=0$,select $\bar{Y}\in {\bar{P}}^{\perp}\cap {\bar{Q}}^{\perp},\bar{Y}\not=\bar{X}$. ${\bar{P}}^{\perp}\cap {\bar{Q}}^{\perp}$ is a 2-dimensional subspace of $V$,so it can be generated by $\bar{X},\bar{Y}$.First we want to prove $\langle X,Y\rangle=0$ and $\langle Y,Y\rangle\not=0$ can't set up at the same time.Otherwise, $\langle X,Y\rangle=0$ and $\langle Y,Y\rangle\not=0$.So $\bar{X},\bar{Y}\in {\bar{P}}^{\perp}\cap {\bar{Q}}^{\perp}\cap {\bar{X}}^{\perp}$, ${\bar{P}}^{\perp}\cap {\bar{Q}}^{\perp} \subset {\bar{P}}^{\perp}\cap {\bar{Q}}^{\perp}\cap {\bar{X}}^{\perp}$.It means ${\bar{P}}^{\perp}\cap {\bar{Q}}^{\perp}={\bar{P}}^{\perp}\cap {\bar{Q}}^{\perp}\cap {\bar{X}}^{\perp}$ According to lemma 2.4,we know $X=t_1 P+t_2 Q$,$t_1,t_2 \in \mathbb{F}_{q}$.We can always obtain $\langle P,Q\rangle=0$ from $\langle X,X\rangle=0$.Now ${\bar{P}}^{\perp}\cap {\bar{Q}}^{\perp}=\mathbf{P}(\mathbb{F}_{q} P+\mathbb{F}_{q} Q)$.From lemma 2.2,lines in $\mathbf{P}(\mathbb{F}_{q} P+\mathbb{F}_{q} Q)$ are all isotropic lines.In particular,$\bar{Y} \in {\bar{P}}^{\perp}\cap {\bar{Q}}^{\perp}$,so $\langle Y,Y\rangle=0$,contradiction.It means $\langle X,Y\rangle=0$ and $\langle Y,Y\rangle\not=0$ can't set up at the same time.

Lines on ${\bar{P}}^{\perp}\cap {\bar{Q}}^{\perp}$ are in $\mathbf{P}(\mathbb{F}_{q} X+\mathbb{F}_{q} Y)$.Count the number of isotropic lines in $\mathbf{P}(\mathbb{F}_{q} X+\mathbb{F}_{q} Y)-\{\bar{X}\}$.We must solve equation $\langle t_1 X+t_2 Y,t_1 X+t_2 Y\rangle=0$,$t_2\not=0$.Without loss of generality,let $t_2=1$.It is $2t_1\langle X,Y\rangle+\langle Y,Y\rangle=0$.Discuss below:\\
case 1: $\langle X,Y\rangle=0$,then $\langle Y,Y\rangle=0$ must be set up.At this time,$t_1$ has $q$ solutions.There are $q+1$ isotropic lines on ${\bar{P}}^{\perp}\cap {\bar{Q}}^{\perp}$.Even.\\
case 2: $\langle X,Y\rangle\not=0$.At this time,$t_1$ has only 1 solution.Then the number is 2,even.
Lemma has proved.
\end{proof}

Now we can prove prop 2.1.\\
1.$\alpha$ is a nonzero square element.We calculate $\mathbf{{G}}_{II}^2$.The element at position $(\bar{X},\bar{X})$ of $\mathbf{{G}}_{II}^2$ equals the number of isotropic lines on ${\bar{X}}^{\perp}$.According to lemma 2.5(1),it is $2q+1$,odd.The element at position $(\bar{X},\bar{Y})$ $(\bar{X}\not=\bar{Y})$ of $\mathbf{{G}}_{II}^2$ is the number of isotropic lines on ${\bar{X}}^{\perp} \cap {\bar{Y}}^{\perp}$.According to lemma 2.6,it is even.So,$\mathbf{{G}}_{II}^2 \equiv I \pmod 2$.So,$\mathbf{{G}}_{II}$ is of full rank.\\
2.$\alpha$ is a non-square element.Accoding to lemma 2.5(2),the unique isotropic line on ${\bar{X}}^{\perp}$ is $\bar{X}$.So,$\mathbf{{G}}_{II}=I$.Full rank.The proposition has proved.

\section{The anisotropic case}

\begin{proposition}The 2-rank of $\mathbf{G}_{AA}$ is of full rank over $\mathbb{F}_2$.
\end{proposition}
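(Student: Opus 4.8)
My approach does not square $\mathbf{G}_{AA}$ directly, and it is worth seeing why. The diagonal of $\mathbf{G}_{AA}^{2}$ is harmless: by Lemma 2.5 each anisotropic $\bar{X}^{\perp}$ carries exactly $q^{2}$ anisotropic lines, and $q^{2}$ is odd. But the off-diagonal entry at $(\bar{X},\bar{Y})$ counts the anisotropic lines of $\mathbf{P}(X^{\perp}\cap Y^{\perp})$, a projective line with $q+1$ points. This count is even when $Q$ restricted to the plane $\mathbb{F}_{q}X+\mathbb{F}_{q}Y$ is nondegenerate (it is then $q-1$ or $q+1$), but it equals $q$, which is odd, precisely when that plane is tangent to the quadric, i.e. when $Q$ has rank $1$ on it. Such tangent planes really occur, so $\mathbf{G}_{AA}^{2}\not\equiv I\pmod 2$ in general; this is the obstruction a naive argument runs into.

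To sidestep the tangency bookkeeping I would work with the full incidence matrix $\mathbf{G}$ on all of $\mathbf{P}(V)$ and first prove the clean identity $\mathbf{G}^{2}\equiv I\pmod 2$. Its diagonal entry at $\bar{X}$ counts the lines of the hyperplane $\bar{X}^{\perp}$, namely $(q^{3}-1)/(q-1)=q^{2}+q+1$, which is odd. An off-diagonal entry at $(\bar{X},\bar{Y})$ with $\bar{X}\neq\bar{Y}$ counts the lines of $X^{\perp}\cap Y^{\perp}$; since $Q$ is nondegenerate, two distinct hyperplanes meet in a $2$-dimensional space, giving $q+1$ lines, an even number. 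Hence $\mathbf{G}$ is an involution over $\mathbb{F}_{2}$, and in particular invertible. Note that these two counts do not depend on whether $\alpha$ is a square, so this step is uniform in both cases.

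I would then combine the involution with Proposition 2.1. Write the block decomposition
\[
\mathbf{G}=\begin{pmatrix}\mathbf{G}_{II}&\mathbf{G}_{IA}\\ \mathbf{G}_{AI}&\mathbf{G}_{AA}\end{pmatrix},\qquad \mathbf{G}_{AI}=\mathbf{G}_{IA}^{T}.
\]
Suppose $\mathbf{G}_{AA}v=0$ for some $v\in\mathbb{F}_{2}^{A}$, and let $\tilde{v}$ be the vector on $\mathbf{P}(V)$ equal to $v$ on $A$ and to $0$ on $I$. Then $\mathbf{G}\tilde{v}$ has $I$-component $\mathbf{G}_{IA}v$ and $A$-component $\mathbf{G}_{AA}v=0$, so it is supported on $I$. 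Applying $\mathbf{G}$ once more and using $\mathbf{G}^{2}=I$, the vector $\tilde{v}$ must have $I$-component $\mathbf{G}_{II}\mathbf{G}_{IA}v$ and $A$-component $\mathbf{G}_{AI}\mathbf{G}_{IA}v$. Comparing $I$-components gives $\mathbf{G}_{II}\mathbf{G}_{IA}v=0$, and since $\mathbf{G}_{II}$ is invertible over $\mathbb{F}_{2}$ by Proposition 2.1 we conclude $\mathbf{G}_{IA}v=0$. Comparing $A$-components then yields $v=\mathbf{G}_{AI}\mathbf{G}_{IA}v=0$. Thus $\mathbf{G}_{AA}$ is injective, and being square it has full rank.

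The main difficulty is conceptual rather than computational: one must recognize that $\mathbf{G}_{AA}$ is not itself an involution modulo $2$ — the tangent planes spoil exactly the off-diagonal parities — and that the correct remedy is to lift to the full space, where $\mathbf{G}^{2}\equiv I$ holds for elementary counting reasons, and then descend to the anisotropic block using only the invertibility of $\mathbf{G}_{II}$ already furnished by Proposition 2.1. Everything else (the two hyperplane/plane line-counts, the block manipulation) is routine once this identity is in hand.
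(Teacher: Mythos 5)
Your proof is correct, and it takes a genuinely different route from the paper's. The paper does exactly what you warn against: it computes $\mathbf{G}_{AA}^2$, uses its Lemma 3.2 to pin down when an off-diagonal entry is odd (your tangency condition, in the form $\langle X,X\rangle\langle Y,Y\rangle=\langle X,Y\rangle^2$), and then repairs the failure of $\mathbf{G}_{AA}^2\equiv I \pmod 2$ by splitting the anisotropic lines into square type ($S$) and non-square type ($T$). Since a square times a non-square is never a square, the tangency condition cannot hold between the two classes, so $\mathbf{G}_{AA}^2\equiv I+\mathbf{B}\pmod 2$ with $\mathbf{B}$ block-diagonal (blocks $\mathbf{B}_1,\mathbf{B}_2$ indexed by $S$ and $T$) and with zero diagonal; four further counting lemmas (3.3--3.6), which establish evenness of solution counts of systems such as $\langle X,Z\rangle^2=1,\ \langle Z,Z\rangle=1$, then give $\mathbf{B}_1^2\equiv\mathbf{B}_2^2\equiv 0\pmod 2$, hence $\mathbf{G}_{AA}^4\equiv I\pmod 2$ and full rank. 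Your argument replaces all of that machinery --- Lemmas 3.2 through 3.6 and the $S/T$ splitting --- by the single identity $\mathbf{G}^2\equiv I\pmod 2$ on all of $\mathbf{P}(V)$, which holds for purely dimension-theoretic reasons (a hyperplane of $\mathbb{F}_q^4$ contains $q^2+q+1$ lines, odd; two distinct hyperplanes meet in a plane with $q+1$ lines, even; nondegeneracy guarantees $X^{\perp}\neq Y^{\perp}$ when $\bar{X}\neq\bar{Y}$), followed by the block-kernel descent: from $\mathbf{G}_{AA}v=0$ one gets $\mathbf{G}_{II}(\mathbf{G}_{IA}v)=0$, hence $\mathbf{G}_{IA}v=0$ by Proposition 2.1, hence $v=\mathbf{G}_{AI}\mathbf{G}_{IA}v=0$; all of this checks out. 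What you gain is brevity, uniformity in $\alpha$, and the conceptual point that, given the involution $\mathbf{G}^2\equiv I$, invertibility of $\mathbf{G}_{AA}$ is formally equivalent to that of $\mathbf{G}_{II}$. The price is that your proof of the anisotropic case now depends on the isotropic case, whereas the paper's Section 3 stands on its own (it never invokes Proposition 2.1) and yields finer information along the way: the explicit $S/T$ block structure of $\mathbf{G}_{AA}^2$ and the identity $\mathbf{G}_{AA}^4\equiv I\pmod 2$, which is stronger than mere invertibility.
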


\begin{lemma} Suppose $\bar{P_1}$, $\bar{P_2}$ are different anisotropic lines.Then there is only one isotropic line on ${\bar{P_1}}^{\perp} \cap {\bar{P_2}}^{\perp}$ if and only if $\langle P_1,P_1\rangle\langle P_2,P_2\rangle=\langle P_1,P_2\rangle^2$.
\end{lemma}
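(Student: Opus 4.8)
The plan is to translate the combinatorial count into the restriction of $Q$ to the two-dimensional subspace cut out by the two orthogonality conditions, and then to read off everything from the Gram determinant of $P_1,P_2$. Write $a=\langle P_1,P_1\rangle$, $b=\langle P_1,P_2\rangle$, $c=\langle P_2,P_2\rangle$, and set $W=\mathbb{F}_q P_1+\mathbb{F}_q P_2$, so that $G=\begin{pmatrix}a&b\\ b&c\end{pmatrix}$ is the Gram matrix of the form on $W$. Since $\bar P_1\ne\bar P_2$ the vectors $P_1,P_2$ are independent and $\dim W=2$; since $\bar P_1,\bar P_2$ are anisotropic, $a\ne0$ and $c\ne0$. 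The hypothesis $\langle P_1,P_1\rangle\langle P_2,P_2\rangle=\langle P_1,P_2\rangle^2$ is exactly $\det G=ac-b^2=0$, so the lemma reduces to the claim that $Q$ restricted to $W^\perp$ has exactly one isotropic line if and only if $\det G=0$.

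First I would identify the relevant space. By definition ${\bar P_1}^\perp\cap{\bar P_2}^\perp$ is the projectivization of $U:=\{W'\in V:\langle W',P_1\rangle=\langle W',P_2\rangle=0\}=W^\perp$, and because the form on $V$ is non-degenerate with $P_1,P_2$ independent, $\dim U=4-2=2$ and moreover $U^\perp=W$. The isotropic lines on ${\bar P_1}^\perp\cap{\bar P_2}^\perp$ are then precisely the projective zeros of the binary quadratic space $(U,Q|_U)$, and the count of such zeros is governed entirely by the rank of $Q|_U$: it is $0$ or $2$ when $Q|_U$ is non-degenerate (anisotropic resp.\ hyperbolic plane), is $1$ when $Q|_U$ has rank $1$, and is $q+1$ when $Q|_U\equiv 0$.

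The observation that links $U$ back to $G$ is that the radicals agree: $\mathrm{rad}(U)=U\cap U^\perp=U\cap W$, while $\mathrm{rad}(W)=W\cap W^\perp=W\cap U$, so $\mathrm{rad}(U)=\mathrm{rad}(W)$ and $\dim\mathrm{rad}(U)=2-\operatorname{rank}G$. If $\det G\ne0$ then $\operatorname{rank}G=2$, the radical is trivial, $Q|_U$ is non-degenerate, and the count is $0$ or $2$, never $1$. If $\det G=0$ then $a\ne0$ forces $\operatorname{rank}G=1$, so $R:=\mathrm{rad}(U)$ is one-dimensional; every vector of $R$ is isotropic, giving one isotropic line $\bar R$, while the form induced on $U/R$ is non-degenerate of dimension $1$, hence anisotropic, so no isotropic vector of $U$ lies outside $R$. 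This yields exactly one isotropic line and settles both directions at once.

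The one point needing care — and the place where the anisotropy of $P_1$ is genuinely used — is excluding the degenerate possibility $Q|_U\equiv 0$, which would produce $q+1$ isotropic lines and wreck the dichotomy. I would dispose of this at the outset: if $Q|_U\equiv0$ then $U$ is totally isotropic, so $U\subseteq U^\perp=W$, and equality of dimensions forces $U=W$; but then $W$ itself would be totally isotropic, contradicting $a=\langle P_1,P_1\rangle\ne0$. Hence $Q|_U$ is never the zero form, the rank-$1$ versus rank-$2$ analysis above is exhaustive, and the proof is complete. (As a byproduct this records $W\ne W^\perp$, so $U\ne W$ and the two hyperplanes ${\bar P_1}^\perp,{\bar P_2}^\perp$ really do meet in a projective line.)
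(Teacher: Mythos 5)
Your proof is correct, and it takes a genuinely different route from the paper's. You work entirely inside the structure theory of quadratic spaces: identify ${\bar P_1}^\perp\cap{\bar P_2}^\perp$ with $\mathbf{P}(W^\perp)$ for $W=\mathbb{F}_qP_1+\mathbb{F}_qP_2$, observe that $\mathrm{rad}(W^\perp)=W\cap W^\perp=\mathrm{rad}(W)$ so that $\dim\mathrm{rad}(W^\perp)=2-\operatorname{rank}G$, and then read off the number of isotropic lines from the classification of binary quadratic forms (non-degenerate gives $0$ or $2$, anisotropic versus hyperbolic; rank $1$ gives exactly one line, the radical; the totally isotropic case is impossible since $\langle P_1,P_1\rangle\neq0$). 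The paper instead proves the two implications separately and computationally: for the forward direction it shows the unique isotropic line $\bar Z$ must satisfy ${\bar P_1}^\perp\cap{\bar P_2}^\perp\subset{\bar Z}^\perp$ (because $\langle X,X\rangle+2k\langle X,Z\rangle=0$ can have no solution $k$), invokes its Lemma 2.4 to place $Z$ in the span of $P_1,P_2$, and extracts the determinant condition from the resulting $2\times 2$ linear system; for the converse it exhibits the isotropic vector $P_1-t_0P_2$ explicitly (with $t_0=\langle P_1,P_2\rangle/\langle P_2,P_2\rangle$) and rules out a second isotropic line by combining Lemma 2.2 (two orthogonal isotropic lines span a totally isotropic plane) with the counts of Lemma 2.5. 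Your argument buys self-containedness (no reliance on Lemmas 2.2, 2.4, 2.5), settles both directions in one stroke, and yields as a by-product the complete list of possible counts $0,1,2,q+1$ --- a fact the paper only asserts in the remark following the lemma; the paper's proof, for its part, stays within the elementary toolkit built in Section 2 and produces the isotropic line by an explicit formula, which is more in the hands-on spirit of the rest of the paper.
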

\begin{proof}\\
$\Rightarrow$.If ${\bar{P_1}}^{\perp} \cap {\bar{P_2}}^{\perp}$ has only one isotropic line $\bar{Z}$,then,any other $\bar{X} \in {\bar{P_1}}^{\perp} \cap {\bar{P_2}}^{\perp}$, we have $\langle X,X\rangle\not=0$. $\bar{Z}$ is the unique,so $\langle X+kZ,X+kZ\rangle\not=0$, $k\in \mathbb{F}_{q}$.It means $\langle X,X\rangle+2k\langle X,Z\rangle=0$ has no solutions.So $\langle X,Z\rangle=0$, $\bar{X} \in {\bar{Z}}^{\perp}$.And $\bar{Z} \in {\bar{Z}}^{\perp}$,so we get ${\bar{P_1}}^{\perp} \cap {\bar{P_2}}^{\perp} \subset {\bar{Z}}^{\perp}$, it is ${\bar{P_1}}^{\perp} \cap {\bar{P_2}}^{\perp} \cap {\bar{Z}}^{\perp} = {\bar{P_1}}^{\perp} \cap {\bar{P_2}}^{\perp}$.According to lemma 2.4, $\bar{Z}$ is on the plane that generated by $\bar{P_1},\bar{P_2}$.Because $\bar{Z} \not= \bar{P_1},\bar{P_2}$,so we can suppose $Z=t_1 P_1+t_2 P_2$,$t_1,t_2\in \mathbb{F}_{q}^*$.From $\bar{Z} \in{\bar{P_1}}^{\perp}$,we know $\langle P_1,Z\rangle=0$,it is $t_1\langle P_1,P_1\rangle+t_2\langle P_1,P_2\rangle=0$.From $\bar{Z} \in{\bar{P_2}}^{\perp}$,we know $\langle P_2,Z\rangle=0$,it is $t_1\langle P_1,P_2\rangle+t_2\langle P_2,P_2\rangle=0$. $t_1,t_2\not=0$,we can get $\langle P_1,P_1\rangle\langle P_2,P_2\rangle=\langle P_1,P_2\rangle^2$.\\
$\Leftarrow$.If $\langle P_1,P_1\rangle \langle P_2,P_2\rangle =\langle P_1,P_2\rangle ^2$,so $\langle P_1,P_2 \rangle \not=0$.Let $\frac{\langle P_1,P_1\rangle}{\langle P_2,P_2\rangle}=\frac{\langle P_1,P_2\rangle}{\langle P_2,P_2\rangle}=t_0\in\mathbb{F}_{q}^*$.We can obtain $\langle P_1,P_1-t_0 P_2 \rangle=\langle P_2,P_1-t_0 P_2 \rangle=0$.So $\overline{P_1-t_0 P_2} \in {\bar{P_1}}^{\perp} \cap {\bar{P_2}}^{\perp}$.If there exsits another isotropic line $\bar{Y} \in {\bar{P_1}}^{\perp} \cap {\bar{P_2}}^{\perp}$,we can get $\langle Y,P_1 \rangle=0,\langle Y,P_2 \rangle=0$.It means $\langle Y,P_1-t_0 P_2 \rangle=0$.From lemma 2.1,we know the lines in ${\bar{P_1}}^{\perp} \cap {\bar{P_2}}^{\perp}$ are all isotropic. ${\bar{P_1}}^{\perp} \cap {\bar{P_2}}^{\perp}$ is a plane,the number of lines on it is $q+1$,and $\bar{P_1},\bar{P_2}$ are anisotropic lines,from lemma 2.5,no matter $\alpha$ is a nonzero square element or a non-square element,the number of isotropic lines on $\bar{P_1}^{\perp}$ and $\bar{P_2}^{\perp}$ is $q+1$.So $\bar{P_1}^{\perp}={\bar{P_1}}^{\perp} \cap {\bar{P_2}}^{\perp}=\bar{P_2}^{\perp}$.$\bar{P_1}=\bar{P_2}$,contradiction.So $\overline{P_1-t_0 P_2}$ is the unique isotropic line on ${\bar{P_1}}^{\perp} \cap {\bar{P_2}}^{\perp}$.Lemma has proved.
\end{proof}
From lemma 2.2,the number of isotropic lines on a plane is $0$,$1$,$2$ or $q+1$.In lemma 3.2,we know if $\langle P_1,P_1\rangle\langle P_2,P_2\rangle\not=\langle P_1,P_2\rangle^2$,the number of isotropic lines on ${\bar{P_1}}^{\perp} \cap {\bar{P_2}}^{\perp}$ is even.

We know $N$ denote the set of anisotropic line in $P(V)$.We also divide $N$ into two parts, $S$ and $T$. $S=\{\bar{X}\in P(V)|Q(X)\  equals\  a\  nonzero\  square\  element\  in\  \mathbb{F}_q\}$, $T=\{\bar{X}\in P(V)|Q(X)\  equals\  a\  nonsquare\  element\  in\  \mathbb{F}_q\}$.The lines in $S$ are called square anisotropic lines and the lines in $T$ are called non-square anisotropic lines.$N$ is the disjoint union of $S$ and $T$.Any $\bar{X} \in S$,we know $Q(X)$ equals a nonzero square element.We can find unique $k_X$ such that $Q(k_X X)=1$, $k_X\in  \mathbb{F}_q^*$, $\bar{X}$ and $\overline{k_X X}$ are the same line.Similarly,if $\bar{Y} \in T$,we know $Q(X)$ equals a non-square element.We can find unique $k_Y$ such that $Q(k_Y Y)=\beta$, $k_Y\in  \mathbb{F}_q^*$,$\beta$ is a fixed non-square element in $\mathbb{F}_q^*$, $\bar{Y}$ and $\overline{k_Y Y}$ is the same line.So in the following discussion,if $\bar{X}$ denotes a square anisotropic line,then we choose $X$ such that $\langle X,X \rangle =1$.Similarly,if $\bar{Y}$ denotes a non-square anisotropic line,then we choose $Y$ such that $\langle Y,Y \rangle =\beta$.

\begin{lemma} $\bar{X}$ is a fixed square anisotropic line, such that $\langle X,X\rangle =1$. Then the number of  $\bar{Y}$ satisfy
\begin{equation}  \begin{cases}    \langle X,Y\rangle ^2=1 \\\langle Y,Y\rangle =1  \end{cases}\end{equation} is even ($\bar{Y}\not=\bar{X}$).
\end{lemma}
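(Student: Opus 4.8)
The plan is to exhibit a fixed-point-free involution on the set of lines $\bar{Y}$ satisfying the system, from which evenness is immediate. Note first that by Lemma 3.2 this set is exactly the collection of square anisotropic partners $\bar{Y}\neq\bar{X}$ for which $\bar{X}^\perp\cap\bar{Y}^\perp$ carries a unique isotropic line, but for the counting argument only the equations matter. Since $\bar{X}$ is normalized so that $\langle X,X\rangle=1$, the vector $X$ is anisotropic and I would form the orthogonal reflection
\[\sigma_X:V\to V,\qquad \sigma_X(Z)=Z-2\langle X,Z\rangle X.\]
The two standard properties to record are that $\sigma_X$ is an involution ($\sigma_X^2=\mathrm{id}$) and an isometry ($\langle\sigma_X(Z),\sigma_X(W)\rangle=\langle Z,W\rangle$); both follow by direct expansion using $\langle X,X\rangle=1$. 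Being linear, $\sigma_X$ acts on $\mathbf{P}(V)$ by $\bar{Z}\mapsto\overline{\sigma_X(Z)}$, and this action is again an involution.

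Next I would verify that $\sigma_X$ maps the solution set to itself. If $\bar{Y}$ solves the system, then $\langle\sigma_X(Y),\sigma_X(Y)\rangle=\langle Y,Y\rangle=1$ because $\sigma_X$ is an isometry, while $\langle X,\sigma_X(Y)\rangle=\langle X,Y\rangle-2\langle X,Y\rangle\langle X,X\rangle=-\langle X,Y\rangle$, so $\langle X,\sigma_X(Y)\rangle^2=\langle X,Y\rangle^2=1$; thus $\overline{\sigma_X(Y)}$ satisfies the same two equations. One must also confirm $\overline{\sigma_X(Y)}\neq\bar{X}$, for otherwise $\sigma_X(Y)$ would be a scalar multiple of $X$, and applying $\sigma_X$ once more would force $\bar{Y}=\bar{X}$, which is excluded.

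The crux is to show this involution has no fixed points on the solution set. Suppose $\overline{\sigma_X(Y)}=\bar{Y}$, i.e. $\sigma_X(Y)=\lambda Y$ for some $\lambda\in\mathbb{F}_q^*$. Comparing norms and using $\langle Y,Y\rangle=1$ gives $\lambda^2=1$, so $\lambda=\pm 1$. If $\lambda=1$ then $\langle X,Y\rangle X=0$, forcing $\langle X,Y\rangle=0$, which contradicts $\langle X,Y\rangle^2=1$. If $\lambda=-1$ then $2Y=2\langle X,Y\rangle X$ (legitimate since $q$ is odd), forcing $\bar{Y}=\bar{X}$, again excluded. Hence $\bar{Y}\mapsto\overline{\sigma_X(Y)}$ is a fixed-point-free involution of the solution set, which therefore has even cardinality.

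I do not anticipate a genuine obstacle here, since the reflection in an anisotropic vector is a standard device and every identity needed is a one-line computation. The only point requiring care is the bookkeeping of lines versus vectors: the normalization $\langle Y,Y\rangle=1$ pins down $Y$ only up to the sign $Y\mapsto-Y$, so that the condition $\langle X,Y\rangle^2=1$ is well defined on the line $\bar{Y}$ and is preserved by $\sigma_X$, and the fixed-point analysis must be carried out at the level of lines (accounting for the scalar $\lambda$) rather than vectors.
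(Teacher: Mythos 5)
Your proof is correct, but it takes a genuinely different route from the paper's. The paper first uses the sign symmetry $Y \mapsto -Y$ to collapse the two systems $\langle X,Y\rangle = \pm 1$ into the single system $\langle X,Y\rangle = 1$, $\langle Y,Y\rangle = 1$, and then enumerates its solutions exactly: for such a $Y$, the vector $Y-X$ is isotropic and lies in $\bar{X}^{\perp}$, so by Lemma 2.5 one can write $Y = X + kX_i$, where $\bar{X_i}$ runs over the $q+1$ isotropic lines of $\bar{X}^{\perp}$ and $k \in \mathbb{F}_q^*$; these give pairwise distinct lines, hence exactly $q^2-1$ solutions, an even number. You instead observe that the orthogonal reflection $\sigma_X(Z) = Z - 2\langle X,Z\rangle X$ is an isometric involution preserving the solution set, and that on this set it has no fixed points (the fixed lines of $\sigma_X$ in $\mathbf{P}(V)$ are $\bar{X}$ and the lines of $\mathbf{P}(X^{\perp})$, both excluded by the equations), so the solutions pair off; your verification of the fixed-point-free property and the remark on well-definedness of $\langle X,Y\rangle^2=1$ at the level of lines are both correct. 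What each approach buys: the paper's enumeration yields the precise count $q^2-1$, at the cost of invoking Lemma 2.5; yours is shorter, purely formal, needs no knowledge of how many isotropic lines lie in $\bar{X}^{\perp}$, and transfers verbatim to the non-square case (Lemma 3.5) and to other dimensions or forms. The two arguments are in fact compatible: in the paper's parametrization $Y = X + kX_i$, your reflection acts as $k \mapsto -k$, which is precisely the fixed-point-free pairing that makes $q^2-1$ even.
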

\begin{proof}The equation has two parts:
\begin{equation}\begin{cases}    \langle X,Y\rangle =1 \\\langle Y,Y\rangle =1  \end{cases}\end{equation}
 and \begin{equation}  \begin{cases}    \langle X,Y\rangle =-1 \\\langle Y,Y\rangle =1  \end{cases}
\end{equation}
If $Y$ is a solution of (2),then $-Y$ is a solution of (3).Similarly,if $Y$ is a solution of (3),then $-Y$ is a solution of (2).Consider $\bar{Y}$ and $\overline{-Y}$ are the same line,so we only solve equations (2).
$\langle Y-X,X\rangle =\langle Y,X\rangle -\langle X,X\rangle =0$, $\langle Y-X,Y-X\rangle =\langle Y,Y\rangle -2\langle Y,X\rangle +\langle X,X\rangle =0$.So $\overline{Y-X}$ is an isotropic line and $\overline{Y-X} \in {\bar{X}}^{\perp} $.From lemma 2.5,when $\bar{X}$ is an anisotropic line,there are $q+1$ isotropic lines on ${\bar{X}}^{\perp}$,no matter $\alpha$ is a nonzero square element or a non-square element.Let $\bar{X_0},\bar{X_1},\cdots,\bar{X_q}$ be the $q+1$ isotropic lines,then  $\overline{Y-X}=\bar{X_i}$, $i\in\{0,1,\cdots,q\}$.We get $Y=X+kX_i,k\in \mathbb{F}_{q}^*$,because  $\bar{Y}\not=\bar{X}$.We can verify $Y=X+kX_i$ are the solutions and different $\bar{Y}$ represent different lines.In conclusion,the number of $Y$ is $q^2-1$,even.
\end{proof}

\begin{lemma} $\bar{X}$, $\bar{Y}$ are fixed square anisotropic line, $\bar{X}\not=\bar{Y}$. $\langle X,X\rangle =\langle Y,Y\rangle =1$.Then the number of $\bar{Z}$ satisfy \begin{equation}  \begin{cases}    \langle X,Z\rangle ^2=1 \\\langle Y,Z\rangle ^2=1\\\langle Z,Z\rangle =1  \end{cases}\end{equation}
is even ($\bar{Z}\not=\bar{X},\bar{Y}$).
\end{lemma}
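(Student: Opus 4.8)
The plan is to follow the template of the proof of the one-variable lemma (Lemma 3.3). A solution is a square anisotropic line $\bar{Z}$, taken with its norm-$1$ representative, such that $\langle X,Z\rangle^{2}=\langle Y,Z\rangle^{2}=1$ and $\bar{Z}\neq\bar{X},\bar{Y}$. First I would reduce the squared conditions to signed ones. Since $\langle Z,Z\rangle=1$ determines $Z$ up to sign and $\langle X,Z\rangle=\pm1\neq0$, each solution line has a unique representative $Z$ with $\langle X,Z\rangle=1$; for that representative $\langle Y,Z\rangle=\varepsilon$ for a single sign $\varepsilon\in\{1,-1\}$. Hence counting solution lines amounts to counting vectors $Z$ with $\langle X,Z\rangle=1$, $\langle Y,Z\rangle\in\{1,-1\}$ and $\langle Z,Z\rangle=1$, distinct pairs of such vectors giving distinct lines, and then deleting any that equal $\bar{X}$ or $\bar{Y}$.

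Next, exactly as in Lemma 3.3, I set $W=Z-X$. Then $\langle X,W\rangle=\langle X,Z\rangle-\langle X,X\rangle=0$ and $\langle W,W\rangle=\langle Z,Z\rangle-2\langle X,Z\rangle+\langle X,X\rangle=0$, so $W$ is an isotropic vector lying in $\bar{X}^{\perp}$, and $Z=X+W$ equals $\bar{X}$ exactly when $W=0$. By Lemma 2.5 the hyperplane $\bar{X}^{\perp}$ carries precisely $q+1$ isotropic lines $\bar{X}_{0},\dots,\bar{X}_{q}$, so every nonzero admissible $W$ is $W=kX_{i}$ with $k\in\mathbb{F}_{q}^{*}$ and a unique index $i$. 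Writing $a=\langle X,Y\rangle$ and $b_{i}=\langle Y,X_{i}\rangle$, the surviving condition $\langle Y,Z\rangle=\pm1$ becomes the linear equation $kb_{i}=\pm1-a$. The whole count thus reduces to counting, for each $i$, the $k\in\mathbb{F}_{q}^{*}$ with $kb_{i}=1-a$ or $kb_{i}=-1-a$.

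I would finish by a short case split on whether $a^{2}=1$. If $a^{2}\neq1$, both $1-a$ and $-1-a$ are nonzero, so each $i$ with $b_{i}\neq0$ yields exactly two admissible $k$ while each $i$ with $b_{i}=0$ yields none; moreover neither $\bar{X}$ nor $\bar{Y}$ is a solution in this case, so the number of solution lines is $2(q+1-m_{0})$ with $m_{0}=\#\{i:b_{i}=0\}$, which is even. The delicate case is $a=\pm1$. Here the projection $Y'=Y-aX$ lies in the nondegenerate $3$-dimensional space $U=\bar{X}^{\perp}$ and has $\langle Y',Y'\rangle=1-a^{2}=0$, so $Y'$ is a nonzero isotropic vector of $U$ with $b_{i}=\langle Y',X_{i}\rangle$; now each $i$ with $b_{i}=0$ contributes $q-1$ values of $k$ and each $i$ with $b_{i}\neq0$ contributes exactly one, for a total of $m_{0}(q-2)+(q+1)$ vectors. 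The main obstacle, and the only genuinely geometric point, is to show $m_{0}=1$: the lines $\bar{X}_{i}$ with $b_{i}=0$ are the isotropic lines of $U$ lying in the tangent plane $Y'^{\perp}\cap U$ to the conic of isotropic points of $\mathbf{P}(U)$ at $\bar{Y'}$, and a tangent meets the conic only at its point of tangency, so the unique isotropic line orthogonal to $Y'$ is $\bar{Y'}$ itself. Since $\bar{Y'}$ is exactly the (excluded) line $\bar{Y}$ and is counted once among these solutions, removing it leaves $m_{0}(q-2)+q\equiv m_{0}+1\equiv0\pmod2$ because $q$ is odd. Thus the count of solution lines is even in every case, which is the assertion.
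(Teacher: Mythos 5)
Your proposal is correct and follows essentially the same route as the paper's proof: normalize signs, translate by $X$ so that $W=Z-X$ is an isotropic vector in $\bar{X}^{\perp}$, parametrize by the $q+1$ isotropic lines $\bar{X}_0,\dots,\bar{X}_q$ of $\bar{X}^{\perp}$ (Lemma 2.5), and split on whether $\langle X,Y\rangle^2=1$; your final counts ($2(q+1-m_0)$ in the generic case, $2q-2$ in the tangent case) agree with the paper's. Two cosmetic differences: in the generic case the paper avoids counting altogether by exhibiting an explicit affine bijection $Z\mapsto k_1Z+k_2X$ between the two sign systems, where you count both directly; and your key fact $m_0=1$ is exactly the paper's Lemma 3.2 applied with $\langle X,X\rangle\langle Y,Y\rangle=1=\langle X,Y\rangle^2$, so you could cite it instead of re-deriving it from the tangent-line property of the conic in $\mathbf{P}(\bar{X}^{\perp})$ (that property is itself fine, since two distinct orthogonal isotropic lines would span a totally isotropic plane inside the nondegenerate $3$-space $\bar{X}^{\perp}$, which is impossible). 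One wording slip to fix: $\bar{Y'}=\overline{Y-aX}$ is \emph{not} the line $\bar{Y}$; what is true is that the excluded line $\bar{Y}$ occurs exactly once among your solutions, namely as the solution vector $X+Y'$ (for $a=1$; as $-Y=X-Y'$ for $a=-1$), whose direction $W$ lies on the tangent line $\bar{Y'}$ --- the arithmetic $m_0(q-2)+q+1-1=2q-2$ is unaffected.
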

\begin{proof}
The equation has four parts.\begin{equation}  \begin{cases}    \langle X,Z\rangle =1 \\\langle Y,Z\rangle =1\\\langle Z,Z\rangle =1  \end{cases}\end{equation} and \begin{equation}  \begin{cases}    \langle X,Z\rangle =1 \\\langle Y,Z\rangle =-1\\\langle Z,Z\rangle =1  \end{cases}\end{equation} and \begin{equation}  \begin{cases}    \langle X,Z\rangle =-1 \\\langle Y,Z\rangle =1\\\langle Z,Z\rangle =1  \end{cases}\end{equation} and \begin{equation}  \begin{cases}    \langle X,Z\rangle =-1 \\\langle Y,Z\rangle =-1\\\langle Z,Z\rangle =1  \end{cases}\end{equation}Consider that the solution of (5)and(8) represent the same lines and the solution of (6)and(7)represent the same lines,so we only solve (5)and(6).It is clear (5)(6)have no same solution.Discuss blow:\\
case1. $\langle X,Y\rangle \not=\pm1$.First we find out nonzero constants $k_1=(-1-\langle X,Y\rangle )/(1-\langle X,Y\rangle )$, $k_2=2/(1-\langle X,Y\rangle )$,such that $k_1+k_2=1,k_1+\langle X,Y\rangle k_2=-1$. If $Z$ is a solution of (5),then $k_1 Z+k_2 X$ is a solution of (6),similarly,if $Z'$ is a solution of (6),then $(Z'-k_2 X)/{k_1}$ is a solution of (5).In conclusion,if the solutions of (5) or (6) exsit ,the number of solution of (4) is even.if (5)and(6) both have no solutions,the number is zero,also even.Case 1 has proved.\\
case 2. $\langle X,Y\rangle =1$.First we solve equation (5).We can get $\langle Z-X,X\rangle =0$, $\langle Z-X,Y\rangle =0$, $\langle Z-X,Z-X\rangle =0$. $Z\not=X$,so $\overline{Z-X}$ is an isotropic line and $\overline{Z-X} \in {{\bar{X}}^{\perp}\cap {\bar{Y}}^{\perp}}$.Because $\bar{X}$ and $\bar{Y}$ are different anisotropic lines,according to lemma 3.2 , ${{\bar{X}}^{\perp}\cap {\bar{Y}}^{\perp}}$ has unique isotropic line,let it be $\bar{W}$.Then $\overline{Z-X}=\bar{W}$,we got $Z-X=tW,t \in \mathbb{F}_{q}^*$, $q-1$ solutions.Notice that $Y-X$ satisfies $\overline{Y-X} \in {{\bar{X}}^{\perp}\cap {\bar{Y}}^{\perp}}$, $Y\not=X$,so $Y$ has form $Y=X+t_0 W$,$t_0\in\mathbb{F}_{q}^*$. $Y$ must be one of the $q-1$ solutions.Delete $Y$ from the $q-1$ solutions,we got $q-2$ solutions of (5).\\
Second we solve equation (6).We can also get $\langle Z-X,X\rangle =0$, $\langle Z-X,Z-X\rangle =0$,$Z\not=X$,so $\overline{Z-X}$ is an isotropic line on ${\bar{X}}^{\perp}$.No matter $\alpha$ is a nonzero square element or a non-square element,there are $q+1$ isotropic lines in ${\bar{X}}^{\perp}$.Let them be $\bar{X_0},\bar{X_1},\cdots,\bar{X_q}$.So $\overline{Z-X}=\bar{X_i}$, $i\in\{0,1,\cdots,q\}$. $Z$ has form $Z=X+kX_i$, $k\in\mathbb{F}_{q}^*$.Moreover,$Z$ satisfies $\langle Z,Y\rangle =-1$,it means $\langle X+kX_i,Y\rangle =-1$, $k\langle X_i,Y\rangle =-2$.Consider ${{\bar{X}}^{\perp}\cap {\bar{Y}}^{\perp}}$ has unique isotropic line,let it be $\bar{X_0}$.It means $\langle X_0,Y\rangle =0$, $\langle X_i,Y\rangle \not=0$, $i\in\{1,2,\cdots,q\}$.For $X_i$,$\langle X_i,Y\rangle \not=0$,there exists unique $k=k_{X_i}$ such that $k_{X_i}\langle X_i,Y\rangle =-2$.We solve $q$ solutions of (6).\\
In conclusion,in case 2,the number of solutions of (4) is $2q-2$,even.\\
case 3. $\langle X,Y\rangle =-1$.Let $X'=X,Y'=-Y,Z'=Z$.It's the same as case 2.
Lemma has proved.
\end{proof}

\begin{lemma} $\bar{X}$ is a fixed non-square anisotropic line,such that $\langle X,X\rangle =\beta$.The number of $\bar{Y}$ satisfy \begin{equation}  \begin{cases}    \langle X,Y\rangle ^2={\beta}^2 \\\langle Y,Y\rangle =\beta  \end{cases}\end{equation} is even ($\bar{Y}\not=\bar{X}$).
\end{lemma}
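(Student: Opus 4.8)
The plan is to reproduce the argument of Lemma 3.3 almost verbatim, because the only geometric input it uses---that every anisotropic line $\bar{X}$ carries exactly $q+1$ isotropic lines on $\bar{X}^{\perp}$---is supplied by Lemma 2.5 for both the square and the non-square case. First I would split the quadratic constraint $\langle X,Y\rangle^2=\beta^2$ into the two systems
\[
\text{(A)}\quad \langle X,Y\rangle=\beta,\ \langle Y,Y\rangle=\beta, \qquad\qquad \text{(B)}\quad \langle X,Y\rangle=-\beta,\ \langle Y,Y\rangle=\beta .
\]
The substitution $Y\mapsto -Y$ interchanges the solution sets of (A) and (B); since $\bar{Y}=\overline{-Y}$, each line meeting the hypotheses has exactly one of its two representatives $Y,-Y$ (both normalized to $\langle\cdot,\cdot\rangle=\beta$) solving (A). Hence the number of admissible lines equals the number of vector solutions of (A), and it suffices to count the latter.

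To count (A) I would pass to the vector $Y-X$ and compute $\langle Y-X,X\rangle=\beta-\beta=0$ and $\langle Y-X,Y-X\rangle=\beta-2\beta+\beta=0$, so that $\overline{Y-X}$ is an isotropic line lying on $\bar{X}^{\perp}$. By Lemma 2.5 there are precisely $q+1$ such isotropic lines $\bar{X_0},\dots,\bar{X_q}$, whence every solution has the shape $Y=X+kX_i$ with $k\in\mathbb{F}_q^{*}$, the value $k=0$ being forbidden by $\bar{Y}\neq\bar{X}$. Conversely each such $Y$ solves (A): since $X_i\in\bar{X}^{\perp}$ and $\bar{X_i}$ is isotropic, the cross terms vanish and $\langle X,Y\rangle=\langle X,X\rangle=\beta$, $\langle Y,Y\rangle=\langle X,X\rangle=\beta$.

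Finally I would check that distinct pairs $(X_i,k)$ give distinct vectors: from $Y-X=kX_i$ one recovers both the isotropic direction $\bar{X_i}$ and the scalar $k$, so the $(q+1)(q-1)$ vectors are pairwise different and exhaust the solutions of (A). This yields $q^{2}-1$ vector solutions, hence $q^{2}-1$ admissible lines, and $q^{2}-1$ is even since $q$ is odd.

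I do not anticipate a real obstacle, as the computation collapses exactly as in Lemma 3.3 once $1$ and $-1$ are replaced by $\beta$ and $-\beta$. The one place that deserves a moment's care is the passage from vectors to projective lines---confirming that each qualifying line is counted once, that the representatives $Y$ and $-Y$ are not both counted in (A), and that $Y=X$ is the only representative of $\bar{X}$ satisfying (A) and is correctly excluded. These points are settled by pairing with $X$ and invoking $\langle X,X\rangle=\beta\neq 0$, precisely the device already used in the square case.
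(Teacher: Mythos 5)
Your proposal is correct and matches the paper's intent exactly: the paper's own ``proof'' of this lemma is simply the remark that it is similar to Lemma 3.3, and you have carried out precisely that adaptation, replacing $1$ by $\beta$, splitting $\langle X,Y\rangle^2=\beta^2$ into the $\pm\beta$ systems paired by $Y\mapsto -Y$, reducing system (A) to the isotropic lines on $\bar{X}^{\perp}$ via $Y-X$, and obtaining $q^2-1$ solutions, which is even for odd $q$. Your extra care about the vector-to-line counting (uniqueness of the normalized representative solving (A)) is a welcome detail the paper leaves implicit.
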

The proof is similar to lemma 3.3.

\begin{lemma} $\bar{X}$,$\bar{Y}$ are fixed non-square anisotropic line, $\bar{X}\not=\bar{Y}$. $\langle X,X\rangle =\langle Y,Y\rangle =\beta$.The number of $\bar{Z}$ satisfy \begin{equation}  \begin{cases}    \langle X,Z\rangle ^2={\beta}^2 \\\langle Y,Z\rangle ^2={\beta}^2\\\langle Z,Z\rangle =\beta  \end{cases}\end{equation} is even ($\bar{Z}\not=\bar{X},\bar{Y}$).
\end{lemma}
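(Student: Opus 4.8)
The plan is to run exactly the argument of Lemma 3.4, with the normalisation value $1$ replaced throughout by $\beta$; the point is that every ingredient it uses (the counts in Lemma 2.5 and Lemma 3.2) was stated uniformly for anisotropic lines, so the fact that $\beta$ is a non-square never interferes. First I would split the squared system (10) into its four sign-parts, indexing them by $(\epsilon,\delta)$ with $\epsilon,\delta\in\{+,-\}$, where the part $(\epsilon,\delta)$ imposes $\langle X,Z\rangle=\epsilon\beta$, $\langle Y,Z\rangle=\delta\beta$ and $\langle Z,Z\rangle=\beta$. Since $Z\mapsto -Z$ preserves $\langle Z,Z\rangle=\beta$, fixes the projective line $\bar{Z}$, and flips both signs, it carries part $(+,+)$ bijectively onto $(-,-)$ and $(+,-)$ onto $(-,+)$. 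Hence the number of admissible $\bar{Z}$ equals the number of vector solutions of the two parts $(+,+)$ and $(+,-)$ together (these share no solution since $\beta\neq-\beta$ in odd characteristic), and it suffices to show this sum is even. I would then case-split on the value of $\langle X,Y\rangle$ relative to $\pm\beta$.

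In the generic case $\langle X,Y\rangle\neq\pm\beta$ I would produce an explicit linear bijection between the two remaining parts. Seeking a map $Z\mapsto k_1Z+k_2X$ that sends a solution of $(+,+)$ to a solution of $(+,-)$, one is led to require $k_1+k_2=1$ (so that the image again has norm $\beta$ and inner product $\beta$ with $X$) and $k_1\beta+k_2\langle X,Y\rangle=-\beta$ (so that the value of $\langle Y,\,\cdot\,\rangle$ switches from $\beta$ to $-\beta$). Solving gives $k_2=2\beta/(\beta-\langle X,Y\rangle)$ and $k_1=-(\beta+\langle X,Y\rangle)/(\beta-\langle X,Y\rangle)$, both of which are defined and nonzero precisely when $\langle X,Y\rangle\neq\pm\beta$. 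Invertibility (from $k_1\neq0$) then makes this a bijection between the two solution sets, so their total is even.

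The boundary values $\langle X,Y\rangle=\pm\beta$ I would treat by a direct count, and by the substitution $Y\mapsto -Y$ it is enough to handle $\langle X,Y\rangle=\beta$. The decisive point is that this is exactly the locus where Lemma 3.2 applies: here $\langle X,X\rangle\langle Y,Y\rangle=\beta^2=\langle X,Y\rangle^2$, so $\bar{X}^\perp\cap\bar{Y}^\perp$ carries a unique isotropic line $\bar{W}$. For part $(+,+)$ one checks that $\overline{Z-X}$ is isotropic and lies in $\bar{X}^\perp\cap\bar{Y}^\perp$, hence equals $\bar{W}$; this produces the $q-1$ solutions $Z=X+tW$ with $t\in\mathbb{F}_q^*$, from which the line $\bar{Y}$ must be deleted, leaving $q-2$. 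For part $(+,-)$, $\overline{Z-X}$ is isotropic in $\bar{X}^\perp$, which carries $q+1$ isotropic lines by Lemma 2.5 whether or not $\alpha$ is a square; writing $Z=X+kX_i$ and imposing $\langle Y,Z\rangle=-\beta$ yields $k\langle X_i,Y\rangle=-2\beta$, which has a unique nonzero root $k$ for each of the $q$ lines $X_i$ with $\langle X_i,Y\rangle\neq0$ (the excluded one being $\bar{W}$), giving $q$ solutions. The total is $(q-2)+q=2q-2$, even.

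I do not expect a genuine obstacle, since the proof is a transcription of Lemma 3.4. The one thing that must be checked carefully, and which is the real content, is the dovetailing of the two cases: the denominators $\beta\mp\langle X,Y\rangle$ of the generic bijection vanish exactly on the locus $\langle X,Y\rangle=\pm\beta$ where Lemma 3.2 supplies the unique isotropic line, so the generic argument and the boundary count cover complementary situations with neither overlap nor gap. It also matters that Lemma 2.5 and Lemma 3.2 were stated for anisotropic lines without reference to the square or non-square type, so the counts ``$q+1$'' and ``unique'' remain valid under the normalisation $\langle Z,Z\rangle=\beta$; this is precisely what lets the whole computation of Lemma 3.4 go through verbatim with $1$ replaced by $\beta$.
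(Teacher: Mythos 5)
Your proposal is correct and is exactly what the paper intends: the paper's entire proof of this lemma is the single remark that it is ``similar to lemma 3.4,'' and your argument is precisely that transcription, with the sign-splitting via $Z\mapsto -Z$, the linear bijection $Z\mapsto k_1Z+k_2X$ in the generic case $\langle X,Y\rangle\neq\pm\beta$, and the $(q-2)+q=2q-2$ count via Lemma 3.2 and Lemma 2.5 at the boundary $\langle X,Y\rangle=\pm\beta$. Your closing observation that the degenerate denominators occur exactly where Lemma 3.2 takes over is a correct and worthwhile sanity check that the two cases mesh with no gap.
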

The proof is similar to lemma 3.4.\\

Now we can prove prop 3.1 . In the above discussion,we know $N=S
\cup T$.So we can give a partition of
$\mathbf{G}_{AA}$.\begin{equation} \left( \begin{array}{ccc}
\mathbf{G}_{SS}& \mathbf{G}_{ST}\\
\mathbf{G}_{TS}& \mathbf{G}_{TT}\\
\end{array} \right),
\end{equation}

According to \cite{1},we know the 4 submatrices are all square matrices.First we calculate $\mathbf{{G}}_{AA}^2$.
The element at position $(\bar{X},\bar{X})$ of $\mathbf{{G}}_{AA}^2$ equals the number of anisotropic lines on ${\bar{X}}^{\perp}$.No matter $\alpha$ is a nonzero square element or a non-square element,it is $q^2$ , odd.
The element at position $(\bar{X},\bar{Y})$ ($\bar{Y}\not=\bar{X}$)
of $\mathbf{{G}}_{AA}^2$ equals the number of anisotropic lines on
${\bar{X}}^{\perp} \cap {\bar{Y}}^{\perp}$.In particular,from lemma 3.2,it is odd
if and only if $\langle X,X\rangle\langle Y,Y\rangle=\langle
X,Y\rangle^2$.When $\bar{X} \in S,\bar{Y} \in T$,we know $\langle
X,X \rangle$ equals a square element and $\langle Y,Y \rangle$
equals a non-square element,so $\langle X,X\rangle\langle
Y,Y\rangle=\langle X,Y\rangle^2$ can't set up.
We obtain $\mathbf{{G}}_{AA}^2=I+ \left(
\begin{array}{ccc}
\mathbf{B}_{1}& \mathbf{O}\\
\mathbf{O}&\mathbf {B}_{2}\\
\end{array} \right)\pmod 2$,
The element on diagonal of $\mathbf{B}_{1}$ and $\mathbf{B}_{2}$ are all $0$.
Then $\mathbf{{G}}_{AA}^4=I+
\left( \begin{array}{ccc}
\mathbf{B}_{1}^2& \mathbf{O}\\
\mathbf{O}&\mathbf {B}_{2}^2\\
\end{array} \right)\pmod 2$. 

We want to prove $\mathbf{B}_{1}^2 \equiv 0 \pmod 2,\mathbf{B}_{2}^2 \equiv 0 \pmod 2$.
Calculate $\mathbf{B}_{1}^2$:
The element at position $(\bar{X},\bar{X})$:It equals the number of set $|\{\bar{Y}\not=\bar{X}|\langle X,X\rangle \langle Y,Y\rangle =\langle X,Y\rangle ^2,\bar{Y}\in N\}|$.According to lemma 3.3,it is even.The element at position $(\bar{X},\bar{Y})$ ($\bar{Y}\not=\bar{X}$):It equals the number of set
$|\{\bar{Z}\not=\bar{X},\bar{Y}|\langle Z,Z\rangle \langle X,X\rangle =\langle Z,X\rangle ^2,\langle Z,Z\rangle \langle Y,Y\rangle =\langle Z,Y\rangle ^2,\bar{Z}\in N\}|$.According to lemma 3.4,it is even.

Now we get $\mathbf{B}_{1}^2 \equiv 0\pmod 2$.It's similar to
$\mathbf{B}_{2}^2$,according to lemma 3.5 and lemma 3.6,we obtain
$\mathbf{B}_{2}^2 \equiv 0\pmod 2$.So,$\mathbf{{G}}_{AA}^4 \equiv I
\pmod 2$,no matter $\alpha$ is a nonzero square element or a
non-square element.It is clear the 2-rank of $\mathbf{{G}}_{AA}$ is
of full rank.Prop 3.1 has proved.

\end{document}